\newtheorem{thm}{Theorem}
\newtheorem{con}[thm]{Conjecture}
 \newtheorem{lemma}[thm]{Lemma}
\newtheorem{prop}[thm]{Proposition} 
 \newtheorem{defn}[thm]{Definition}
\DeclareMathOperator{\inn}{Inn} \DeclareMathOperator{\perm}{Sym}
 \DeclareMathOperator{\soc}{soc}
\DeclareMathOperator{\aut}{Aut} \DeclareMathOperator{\out}{Out}
\DeclareMathOperator{\agl}{A\Gamma L}
\DeclareMathOperator{\sym}{Sym}
 \DeclareMathOperator{\AGL}{AGL}
\DeclareMathOperator{\alt}{Alt}
\newcommand{\sipo}{$\Sigma_{\rm{pos}}(G) $}
\renewcommand{\emptyset}{\varnothing}
\begin{document}
	
	\bibliographystyle{amsplain}
\title[Solubilizers in profinite groups]{Solubilizers in profinite groups}

\author{Andrea Lucchini}
\address{Andrea Lucchini\\ Universit\`a di Padova\\  Dipartimento di Matematica \lq\lq Tullio Levi-Civita\rq\rq\\ Via Trieste 63, 35121 Padova, Italy\\email: lucchini@math.unipd.it}


\begin{abstract} The solubilizer of an element $x$ of a profinite group $G$  is the set of the elements $y$ of $G$ such that the subgroup of $G$ generated by $x$ and $y$ is prosoluble. We propose the following conjecture: the solubilizer of $x$ in $G$ has positive Haar measure if and only if $g$ centralizes \lq almost all\rq \ the non-abelian chief factors of $G$. We reduce the proof of this conjecture to another conjecture concerning finite almost simple groups: there exists a positive $c$ such that, for every finite simple group $S$ and every $(a,b)\in (\aut(S)\setminus \{1\}) \times \aut(S)$, the number of $s$ is $S$ such that $\langle a, bs\rangle $ is insoluble is at least $c|S|$. Work in progress by Fulman, Garzoni and Guralnick  is leading to prove the conjecture when $S$ is a simple group of Lie type. In this paper we prove the conjecture for alternating groups.
\end{abstract}

\maketitle	

\section{Introduction}

Let $G$ be a profinite group. For every $x\in G$, we denote by
${\mathcal S}_G(x)$ the \lq solubilizer\rq \ of $x$ in $G$, i.e. the subset of $G$ consisting of elements $g\in G$ with the property that $\langle x,g\rangle$ is a prosoluble group (here, and throughout all the paper, we will use the notation $\langle X\rangle$ for the closed subgroup of $G$ generated by the subset $X$). In general ${\mathcal S}_G(x)$ is not a subgroup of $G$, but it
can be easily seen that ${\mathcal S}_G(x)$ is a closed subset of $G.$
Indeed if $\mathcal N$ is the family of all open normal subgroups of $G,$ then ${\mathcal S}_G(x)= \cap_{N\in \mathcal N}{\mathcal S}_{G,N}(x),$ where ${\mathcal S}_{G,N}(x)$ is the clopen subset of $G$ consisting of the elements $y$ with the property that $\langle x, y\rangle N/N$ is a soluble subgroup of $G/N.$
Let $\mu$ be the normalized 
Haar measure on $G$. Then  $\mu({\mathcal S}_G(x))$  can be viewed as the probability that a randomly chosen element of $G$  generates a  prosoluble subgroup together with $x$. 

In this paper we want to address the question whether there exists a characterization of the elements $g\in G$ such that  $\mu(\mathcal S_G(g))>0.$ Before  proposing our conjecture we need to introduce some notations. Let \sipo \ be the set  of the elements $g\in G$ such that $\mu(\mathcal S_G(g))>0.$  Given a non-negative integer $t,$ we say that $g$ is a $t$-centralizer in $G$ if, for every open normal subgroup $N$ of $G$, every chief series of $G/N$ contains at most $t$ non-abelian factors that are not centralized by $g.$ Let $\Gamma_{\rm{cent}}(G,t)$ be the set of the $t$-centralizers in $G$ and $\Gamma_{\rm{cent}}(G)=\cup_{t\in \mathbb N}\Gamma_{\rm{cent}}(G,t).$ 

It is not difficult to prove that $\Gamma_{\rm{cent}}(G) \subseteq \Sigma_{\rm{pos}}(G)$ for every profinite group $G$ (see \cref {inclofacile}). We conjecture that the two subsets $\Gamma_{\rm{cent}}(G)$ and $\Sigma_{\rm{pos}}(G)$ coincide.

\begin{con}\label{con1}
$\Gamma_{\rm{cent}}(G)=\Sigma_{\rm{pos}}(G)$ for every profinite  group $G.$
\end{con}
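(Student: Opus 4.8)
The plan is to establish the reverse inclusion $\Sigma_{\rm{pos}}(G) \subseteq \Gamma_{\rm{cent}}(G)$, since the inclusion $\Gamma_{\rm{cent}}(G) \subseteq \Sigma_{\rm{pos}}(G)$ is already available from \cref{inclofacile}; the argument is conditional on the finite almost simple conjecture stated in the abstract, with its constant $c$. Arguing by contraposition, suppose $g \notin \Gamma_{\rm{cent}}(G)$, so that for every $t$ there is an open normal subgroup $N$ for which some chief series of $G/N$ contains more than $t$ non-abelian factors not centralized by $g$. First I would record the reduction to finite quotients: since $\mathcal S_G(g) = \bigcap_{N} \mathcal S_{G,N}(g)$ is a decreasing intersection of clopen sets, $\mu(\mathcal S_G(g)) = \inf_N \mu(\mathcal S_{G,N}(g))$, and $\mu(\mathcal S_{G,N}(g))$ is exactly the proportion of $\bar y \in \bar G := G/N$ for which $\langle \bar g, \bar y\rangle$ is soluble. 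It therefore suffices to show that this proportion tends to $0$ as the number $m$ of non-centralized non-abelian chief factors of $\bar G$ grows without bound.

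The core is a product estimate: fixing a chief series $1 = G_0 < \cdots < G_n = \bar G$, I claim the proportion of $\bar y$ with $\langle \bar g, \bar y\rangle$ soluble is at most $(1-c')^{m}$, where $c'>0$ depends only on $c$ and $m$ counts the chief factors $G_i/G_{i-1}$ that are non-abelian and not centralized by $g$. To prove this I would telescope down the series, processing one such factor $A = G_i/G_{i-1} \cong S^k$ at a time. Conditioning on the image $\bar y G_i$ in $\bar G/G_i$, solubility of $\langle \bar g, \bar y\rangle$ forces solubility of its image in $\bar G/G_i$ and, in addition, that the intersection of $\langle \bar g, \bar y\rangle$ with $A$ (computed in $\bar G/G_{i-1}$) involves no composition factor isomorphic to $S$. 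The decisive step is to bound, for each fixed value of the coordinates outside $A$, the conditional proportion of the $A$-part of $\bar y$ making $\langle \bar g, \bar y\rangle \cap A$ soluble by $1-c'$. When $g$ induces a nontrivial automorphism on some simple direct factor $S_j$ of $A$, this follows directly from the conjecture: writing the induced data as a pair $(a,b) \in (\aut(S)\setminus\{1\}) \times \aut(S)$ and letting the $S_j$-coordinate of $\bar y$ range over $S$, at least $c|S|$ choices make $\langle a, bs\rangle$ insoluble, and a Fubini argument over the remaining coordinates transfers the bound to $A \cong S^k$.

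The main obstacle, as I see it, is twofold. First, a non-centralized factor $A \cong S^k$ on which $g$ permutes the simple direct factors while centralizing each one individually is not reached by a single application of the conjecture; here one must extract insolubility from the coordinate-permutation action, via diagonal subgroups of $S \times \cdots \times S$ supported on a nontrivial cycle of $g$, and reduce this, with a suitably adjusted constant $c'$, back to the single-simple-group statement. Second, one must make the conditional independence rigorous so that the per-factor bounds genuinely multiply: the factor $A$ sits inside a possibly non-split extension with kernel $G_{i-1}$, and one has to verify that fixing $\bar y$ modulo $G_i$ leaves the $A$-part free to range over a full coset on which the estimate applies, uniformly in the conditioning. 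Once the product estimate $(1-c')^{m}$ is in place the conclusion is immediate: for each $t$, choosing $N$ so that $m > t$ forces $\mu(\mathcal S_{G,N}(g))$ below $(1-c')^{t}$, hence arbitrarily small, so $\mu(\mathcal S_G(g)) = 0$ and $g \notin \Sigma_{\rm{pos}}(G)$, which completes the equality.
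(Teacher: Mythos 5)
Your overall strategy coincides with the paper's conditional proof of \cref{con1} (namely \cref{main} together with \cref{con2}): the inclusion $\Gamma_{\rm{cent}}(G)\subseteq \Sigma_{\rm{pos}}(G)$ is quoted from \cref{inclofacile}, the Haar measure is computed as an infimum over finite quotients, and the heart is a per-factor product estimate of the form $(1-c')^m$ over the non-abelian chief factors not centralized by $g$, which is exactly the paper's \cref{crucial}. Your second stated obstacle (rigorous conditioning across a possibly non-split extension) is not a real one: a non-abelian chief factor $A$ is centerless, solubility of $\langle \bar g,\bar y\rangle$ forces solubility of its image in $\aut(A)$ under conjugation, $A$ is identified with $\inn(A)$, and letting the $A$-part of $\bar y$ range over a coset of $A$ lets the corresponding inner automorphism range over all of $\inn(A)$; this is precisely how \cref{crucial} is proved, with no splitting needed.

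The genuine gap is your first obstacle, which you name but do not resolve, and it is exactly where the mathematical content of this implication lies. When the automorphism of $A\cong S^k$ induced by $g$ permutes the $k$ simple factors nontrivially (in the extreme case, acts as a pure coordinate permutation), no coordinate yields a pair $(a,b)$ with $a\neq 1$, so the simple-group conjecture cannot be applied coordinatewise; and your proposed fix --- diagonal subgroups supported on a cycle of $g$, reduced ``back to the single-simple-group statement'' --- does not work as stated. What one can manufacture inside a coordinate stabilizer (as powers and conjugates of $\bar y$-words) are \emph{two} elements of $\aut(S)$ whose $S$-parts are independent and uniformly random; what is then needed is that two such random coset elements generate a subgroup containing $S$ with probability bounded away from zero. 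This is a statement about pairs of random elements, not about the conjecture's pairs (fixed nontrivial $a$, one random $bs$), and it is supplied in the paper by an additional \emph{unconditional} ingredient: the theorem of Menezes, Quick and Roney-Dougal (\cref{colva}), giving probability at least $53/90$ that $S\leq\langle s_1x_1,s_2x_2\rangle$ for random $s_1,s_2\in S$. The paper's \cref{riduco} combines this with Jones' classification of soluble primitive groups containing a regular cyclic subgroup and an induction over block systems of $\langle\sigma,\tau\rangle$; this is also why the paper's constant is $\tilde\eta=\min\{\eta,\tfrac{53}{90}\}$ rather than a function of the conjectural constant alone. Without this ingredient (or an equivalent one), your per-factor bound $1-c'$ is unproven precisely in the permutation case, and the product estimate --- hence the whole reverse inclusion --- does not go through.
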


Notice that if $g_1\in \Gamma_{\rm{cent}}(G,t_1)$ and $g_2\in\Gamma_{\rm{cent}}(G,t_2)$, then $g_1g_2^{-1} \in \Gamma_{\rm{cent}}(G,t_1+t_2)$. So $\Gamma_{\rm{cent}}(G)$ is a subgroup of $G.$
Hence \cref{con1} implies that $\Sigma_{\rm{pos}}(G)$ is a subgroup of $G.$ Before discussing our conjecture, we need to introduce  two other definitions.

\begin{defn} Let $N$ be a finite non-abelian characteristically simple group
	and let $a, b\in \aut(N).$ We identify $N$ with $\inn (N)$ and we denote by ${\rm{P_{ins}}}(N, a, b)$ the probability that a randomly chosen element $n\in N$ is such that $\langle a, bn\rangle$ insoluble.
\end{defn}

\begin{defn}Let $N$ be a finite non-abelian characteristically simple group.  Given a positive real number $\eta$, we say that $N$ is $\eta$-insoluble if ${\rm{P_{ins}}}(N, a, b)>\eta$ for every $(a, b)\in \aut(N)$ with $a\neq 1.$
\end{defn}
 
If $G$ is a profinite group, then with the term \lq composition factor\rq \ of $G$ we mean a composition factor of $G/N$ for some open normal subgroup $N$ of $G$. Our main result is the following.
 
\begin{thm}\label{main}Let $G$ be a profinite group. If there exists $\eta >0$ such that all the non-abelian composition factors of $G$ are $\eta$-insoluble, then $\Gamma_{\rm{cent}}(G)=\Sigma_{\rm{pos}}(G).$
\end{thm}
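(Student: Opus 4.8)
The inclusion $\Gamma_{\rm{cent}}(G)\subseteq \Sigma_{\rm{pos}}(G)$ is \cref{inclofacile}, so the task is to prove $\Sigma_{\rm{pos}}(G)\subseteq \Gamma_{\rm{cent}}(G)$, and I would argue by contraposition: assuming $g\notin \Gamma_{\rm{cent}}(G)$, I would show $\mu(\mathcal S_G(g))=0$. Since $\mathcal S_G(g)=\cap_{N}\mathcal S_{G,N}(g)$ is a decreasing directed intersection of clopen sets, for each open normal $N$ the quantity $\mu(\mathcal S_{G,N}(g))$ is exactly the proportion of $\ol y\in\ol G$, where $\ol G=G/N$, with $\langle \ol g,\ol y\rangle$ soluble. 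Thus everything reduces to a purely finite statement, to be applied to the quotients $\ol G=G/N$: \emph{if a chief series of $\ol G$ contains $k$ non-abelian factors not centralized by $\ol g$, then the proportion of $\ol y\in\ol G$ with $\langle \ol g,\ol y\rangle$ soluble is at most $(1-\eta')^{k}$}, where $\eta'>0$ depends only on $\eta$. Granting this, the hypothesis $g\notin\Gamma_{\rm{cent}}(G)$ supplies, for every $t$, an open normal $N$ and a chief series of $G/N$ with more than $t$ such factors, whence $\mu(\mathcal S_G(g))\le \mu(\mathcal S_{G,N}(g))\le (1-\eta')^{t+1}$ for all $t$, and therefore $\mu(\mathcal S_G(g))=0$, as wanted.

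The finite statement is the heart of the matter and I would prove it by a conditioning argument along a fixed chief series $1=K_0<K_1<\cdots<K_r=\ol G$. The key local computation is the following. Fix a non-abelian factor $F_j=K_j/K_{j-1}\cong S^m$ that $\ol g$ does not centralize, and let $a\in\aut(F_j)$ be the automorphism induced on $F_j$ by conjugation by $\ol g$; non-centrality means precisely $a\neq 1$. Since $K_{j-1}\le C_{\ol G}(F_j)$ while $F_j=\inn(F_j)$ acts by inner automorphisms, fixing the image of $\ol y$ modulo $K_j$ determines the outer part of the automorphism $b\in\aut(F_j)$ induced by $\ol y$, whereas the inner part is governed by the $F_j$-coordinate $f\in F_j$ revealed when passing from $\ol y\bmod K_j$ to $\ol y\bmod K_{j-1}$. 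Mapping into $\aut(F_j)$ one checks that the image of $\langle \ol g,\ol y\rangle$ is $\langle a,bf\rangle$; hence $\langle \ol g,\ol y\rangle$ soluble forces its quotient image $\langle a,bf\rangle$ to be soluble. By $\eta$-insolubility of $F_j$ (applied with $a\neq 1$), the proportion of $f$ making $\langle a,bf\rangle$ insoluble exceeds $\eta'$, so the conditional probability that $\langle \ol g,\ol y\rangle$ is soluble, given $\ol y\bmod K_j$, is at most $1-\eta'$.

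To turn these per-factor estimates into the product bound $(1-\eta')^{k}$ I would sample $\ol y$ uniformly by revealing its ``coordinates'' from the top down, that is, first $\ol y\bmod K_{r-1}$, then $\ol y\bmod K_{r-2}$, and so on, each new coordinate being uniform over the corresponding factor given the past. For a non-abelian uncentralized factor $F_j$ let $E_j$ be the event ``the image of $\langle \ol g,\ol y\rangle$ in $\aut(F_j)$ is soluble''. Then $E_j$ is measurable with respect to $\ol y\bmod K_{j-1}$, the events $E_{j'}$ with $j'>j$ are measurable with respect to the coarser datum $\ol y\bmod K_j$, and the local computation gives $\Pr[E_j\mid \ol y\bmod K_j]\le 1-\eta'$ pointwise. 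Since $\{\langle \ol g,\ol y\rangle\ \text{soluble}\}\subseteq \bigcap_{j}E_j$, the chain rule for conditional probabilities yields $\Pr[\langle \ol g,\ol y\rangle\ \text{soluble}]\le \prod_{j}\Pr[E_j\mid E_{j'},\,j'>j]\le (1-\eta')^{k}$, which is the desired bound.

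The main obstacle is exactly this independence: extracting a \emph{uniform} positive saving at each uncentralized non-abelian factor and making the savings multiply. Two points need care. First, $\eta$-insolubility is assumed for the simple composition factors $S$, whereas the chief factors are $S^m$; one must know that $S$ being $\eta$-insoluble forces $S^m$ to be $\eta'$-insoluble for some $\eta'=\eta'(\eta)>0$, in effect reducing to a single $S$-coordinate, and I would isolate this as a separate lemma. Second, one must verify that the top-down filtration really makes the local events line up so that the conditional bounds can be multiplied; this rests on the containment $K_{j-1}\le C_{\ol G}(F_j)$ and the identification $\inn(F_j)=F_j$ used above, which together guarantee that the randomness deciding $E_j$ is precisely the fresh $F_j$-coordinate and is independent of the coarser data already revealed.
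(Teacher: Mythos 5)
Your proposal is correct and takes essentially the same approach as the paper: your finite key statement is exactly the paper's \cref{crucial}, your top-down conditioning along the chief series (using $\inn(F_j)=F_j$ and $K_{j-1}\le C(F_j)$ to make the fresh coordinate uniform) is the paper's induction on $t$ recast in probabilistic language, and your contrapositive derivation of the profinite statement is trivially equivalent to the paper's direct one. The reduction you defer to a separate lemma --- that $S$ being $\eta$-insoluble forces $S^m$ to be $\eta'$-insoluble --- is precisely the paper's \cref{riduco}, which the paper likewise establishes separately (in Section 2, via the Menezes--Quick--Roney-Dougal theorem), so your overall structure mirrors the paper's.
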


We propose the following conjecture:

\begin{con}\label{con2}
	There exists a positive real number $\eta$ such that all the finite non-abelian simple groups are $\eta$-insoluble.
\end{con}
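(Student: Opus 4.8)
The plan is to prove the conjecture one family at a time using the Classification of Finite Simple Groups, fixing throughout the almost simple group $A=\langle S,a,b\rangle\le\aut(S)$ and reformulating the goal as an upper bound, uniform over all simple $S$ and all pairs $(a,b)$ with $a\neq 1$, on the proportion ${\rm P_{sol}}(S,a,b)=1-{\rm P_{ins}}(S,a,b)$ of elements $s\in S$ for which $\langle a,bs\rangle$ is soluble. The sporadic groups, together with the finitely many groups of bounded order arising as small-rank or small-field exceptions, contribute only finitely many values of $\min_{(a,b),\,a\neq 1}{\rm P_{ins}}(S,a,b)$, each positive; their minimum is an admissible constant. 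The real content is therefore to produce, for the two infinite families (alternating groups and groups of Lie type), a lower bound for ${\rm P_{ins}}$ that does not decay as the degree, the rank, or the field size grows.

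For the alternating groups $S=\alt(n)$ (so $\aut(S)=\sym(n)$ for $n\neq 6$) the argument is permutation-theoretic, and is the one carried out in this paper. The idea is to exhibit a positive proportion of $s\in S$ for which $g=bs$ has a cycle of prime length $p$ with, say, $n/2<p\le n-3$; such an element lies only in rather restricted subgroups, and a Jordan-type theorem on primitive groups containing a cycle of prime length forces a suitable transitive constituent of $\langle a,g\rangle$ (with $a\neq 1$) to contain an alternating group $\alt(m)$ for some $m\ge 5$, hence to be insoluble. Since a fixed positive fraction of a coset $bS$ consists of such elements, the required bound follows.

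For the groups of Lie type, which is the case treated in the work in progress of Fulman, Garzoni and Guralnick, the natural tool is the theory of fixed point ratios together with the classification of the maximal overgroups of elements in general position. The plan is to show that a positive proportion of $g\in bS$ are regular semisimple elements whose order is divisible by suitable primitive prime divisors of $q^e-1$; the maximal subgroups of $A$ containing such a $g$ are severely restricted (essentially to normalizers of tori and to subfield and subspace stabilizers), and one checks that none of these is soluble once $q$ and the rank are not both tiny. Combined with the fact that the fixed point ratio of the fixed nontrivial element $a$ is bounded away from $1$, a Liebeck--Shalev style estimate
\[
{\rm P_{sol}}(S,a,b)\ \le\ \sum_{M}\ {\rm fpr}(a,A/M)\cdot\frac{|M\cap bS|}{|S|},
\]
summed over representatives $M$ of the conjugacy classes of maximal subgroups that could contain both $a$ and such a $g$, is bounded above by a constant strictly less than $1$, uniformly in $q$ and the rank.

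The step I expect to be the main obstacle is exactly this Lie type estimate, for two reasons. First, it must be completely uniform: the fixed point ratio bounds of the form ${\rm fpr}(a,A/M)\le |S|^{-\alpha}$ and the bound on the number of conjugacy classes of maximal subgroups must be applied so that $\eta$ does not degrade as the rank or $q$ tends to infinity. Second, and more seriously, the estimate above really controls full generation $\langle a,bs\rangle\supseteq S$, which is unavailable when $a$ has very large fixed point ratio --- the prototypical difficulty being a long root element or transvection $a$ in a classical group over a small field. In that regime $\langle a,bs\rangle$ is typically a proper subgroup, so one cannot argue by generation and must instead certify insolubility directly, using that a soluble subgroup of a group of Lie type has tightly constrained order and cannot simultaneously accommodate $a$ and an element carrying two independent primitive prime divisors. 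Making this dichotomy uniform across all ranks, all characteristics, and all choices of the problematic element $a$ is where the heart of the proof lies.
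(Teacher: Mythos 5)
What you have written is a programme, not a proof, and the statement itself is Conjecture~\ref{con2}: the paper does not prove it and explicitly leaves it open, establishing only the alternating case (Section~3), handling the sporadic groups via the spread theorem of \cite{sp}, and attributing the Lie type case to the in-preparation work \cite{fg,fgg}. Your Lie type section reproduces that attribution in outline and then, candidly, identifies its two hardest points (uniformity of the fixed-point-ratio estimates in rank and $q$, and the regime where $a$ is a transvection over a small field so that $\langle a,bs\rangle$ is generically a proper subgroup and one must certify insolubility without generation) as unresolved. Deferring exactly the heart of the argument to unavailable results is precisely the gap that makes this statement a conjecture rather than a theorem, so the proposal cannot stand as a proof.

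More concretely, the one case where a complete proof exists --- $S=\alt(n)$ --- you sketch by a route different from the paper's, and that route fails quantitatively. The proportion of $g\in b\alt(n)$ having a cycle of prime length $p$ with $n/2<p\le n-3$ is about $\sum_{n/2<p\le n-3}1/p=O(1/\log n)$ by Mertens' theorem, which tends to $0$; so the ``fixed positive fraction of the coset $bS$'' on which your Jordan-type argument rests does not exist, and no $n$-independent $\eta$ can come out of it. The paper circumvents exactly this obstacle: it splits according to whether $a$ has at most $\delta n$ fixed points (Lemma~\ref{eta1}, via Babai--Hayes \cite{BH} and \L uczak--Pyber \cite{LP}, where full generation of $\alt(n)$ does occur with probability bounded below) or more than $\delta n$ fixed points (Lemma~\ref{eta2}), and in the latter regime it replaces prime cycle lengths by cycle lengths $k\ge\delta_1 n$ with $\phi(k)\ge\delta_2 k$ --- a set of positive density by Erd\"os's results on the distribution of $m/\phi(m)$ (Lemma~\ref{phi}) --- together with the positional condition $(i-1,k)=1$; insolubility is then extracted not from $\alt(m)$-containment but from the primitive action on a minimal block system, using Jones's theorem \cite{jon} on primitive groups with a regular cyclic subgroup, Huppert's classification \cite{hup} of soluble $2$-transitive groups, and the fixed-point bound of Lemma~\ref{fpagl} for $\agl(1,q)$. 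Note also that when $a$ has close to $n$ fixed points, $Q(\alt(n),a,b)$ tends to $0$, so in that regime any correct argument must, as the paper's does, certify insolubility without full generation --- the very phenomenon you flag for Lie type but do not engage with for alternating groups, where your sketch still argues through containment of $\alt(m)$ in a transitive constituent.
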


By \cref{main}, if \cref{con2} is true, then \cref{con1} is also true. 

For a finite non-abelian simple group $S$ and $a, b \in \aut(S)$ with $a\neq 1,$ let $Q(S,a,b)$ be the probability that a randomly chosen element $s\in S$ is such that $S\leq \langle a, bs\rangle$.  Clearly ${\rm{P_{ins}}}(S, a, b)\geq Q(S,a,b).$ {Fulman and Guralnick}  \cite{fg} have a paper in
preparation showing that there exists a universal positive constant $c$ such
	that $Q(S,a,b)\geq c$ for every  finite simple group $S$ of Lie type 
	and for every pair $(a,b) \in S^2$ with $a\neq 1$. Moreover Fulman, Garzoni and Guralnick \cite{fgg} have an article in preparation
	extending the Fulman-Guralnick result to cosets of almost simple groups. This would imply that there exists $\eta>0$ such that
every simple group of Lie type is $\eta$-insoluble.	
By \cite[Theorem 1]{sp}, if $S$ is a finite non-abelian simple group
and $\out(S)$ is cyclic, then $Q(S,a,b)\neq 0$ for every $(a,b) \in (\aut(S)\setminus \{1\}) \times \aut(S).$ Thus there exists $\eta>0$ such that every sporadic simple group is $\eta$-insoluble. In the case of the alternating group, the analogue of the result of Fulman, Garzoni and Guralnick does not hold. It follows from a result of Babai and Hayes \cite[Theorem 1]{BH} that if $a\in \perm(n)$ has $o(n)$ fixed points, then $Q(S,a, b)$ is high for every $b\in B.$ Revising the arguments used by Babai and Hayes we prove that if  $0 < \delta < 1,$ then  there exists a constant $\gamma_\delta>0$ such that $Q(\alt(n)),a,b)\geq \gamma_\delta$ whenever $a, b \in \perm(n)$ and
	$a$ has at most $\delta n$ fixed points (see \cref{eta1}).
However $Q(\alt(n),a,b)$ tends to zero when $n$ goes to infinity and $a$ has many fixed-points. To estimate ${\rm{P_{ins}}}(\alt(n), a, b)$ when
$a$ has \lq many\rq \ fixed points we use a different approach, which relies on number-theoretical results on the distribution of the ratio $m/\phi(m)$ when $m \in \mathbb N$ and $\phi$ is Euler's totient function. With the help of these results we succeed to prove:

\begin{thm}
	There exists a positive constant $\eta>0$ such that $\alt(n)$ is $\eta$-insoluble for every $n\geq 5.$
\end{thm}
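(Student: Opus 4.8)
The plan is to bound ${\rm{P_{ins}}}(\alt(n),a,b)$ below by a constant independent of $n$ and of the pair $(a,b)$ with $a\neq 1$. Since ${\rm{P_{ins}}}(\alt(n),a,b)\geq Q(\alt(n),a,b)$, I can invoke \cref{eta1} whenever $a$ moves a linear number of points. For $n\neq 6$ we have $\aut(\alt(n))=\perm(n)$, so $a,b$ are genuine permutations of $\{1,\dots ,n\}$ and $g:=bx$ ranges uniformly over a fixed coset of $\alt(n)$ in $\perm(n)$ as $x$ runs over $\inn(\alt(n))\cong\alt(n)$; the finitely many small degrees and the exceptional case $n=6$ can be disposed of by a separate finite argument, so I assume $n$ large. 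I fix once and for all a threshold $\delta\in(0,1)$ and split according to the number of fixed points of $a$.

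In the first regime $a$ has at most $\delta n$ fixed points, equivalently $|\supp(a)|\geq(1-\delta)n$. Here \cref{eta1} applies directly and gives ${\rm{P_{ins}}}(\alt(n),a,b)\geq Q(\alt(n),a,b)\geq\gamma_\delta>0$. Thus the whole difficulty is concentrated in the complementary regime, in which $a$ has many fixed points, i.e. $k:=|\supp(a)|$ is small compared with $n$; this is exactly the range in which $Q$ degenerates and a genuinely different, insolubility-specific argument is needed.

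In the second regime I will detect insolubility of $\langle a,g\rangle$ on a single orbit. If $C$ is a cycle of $g$ with $\supp(a)\subseteq C$, then $C$ is invariant under both $g$ and $a$, the restriction $g|_C$ is a full $|C|$-cycle, and $H_C:=\langle a,g\rangle|_C$ is a transitive group of degree $m:=|C|$ containing a full cycle together with the nontrivial element $a|_C$ of support exactly $k$; insolubility of $H_C$ forces insolubility of $\langle a,g\rangle$. The engine is then a criterion guaranteeing that such an $H_C$ is insoluble: by Burnside's theorem a transitive group of prime degree $p$ containing a $p$-cycle is either insoluble or lies in $\AGL_1(p)$, and every nontrivial element of $\AGL_1(p)$ has support at least $p-1$; more generally, by the classification (due to Jones) of primitive groups containing a full cycle, a primitive group of degree $m$ containing an $m$-cycle is, apart from small degrees, insoluble unless it is an affine group $\AGL_1(p)$ of prime degree. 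The soluble transitive groups containing an $m$-cycle are therefore very restricted, being built from the holomorph of $C_m$ and its imprimitive analogues, and their nontrivial elements have support bounded below by a quantity governed by the divisor structure of $m$: the smallest support that can occur is of order $m\bigl(1-1/q\bigr)$ with $q$ the least prime factor of $m$, and more precisely it is controlled by $m/\phi(m)$. Consequently, for every cycle length $m$ outside a sparse bad set $\mathcal B_k$ (the highly divisible $m$ that are not much larger than $k$), the presence of the support-$k$ element $a|_C$ forces $H_C$, hence $\langle a,g\rangle$, to be insoluble.

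It then remains to show that the favourable event occurs with probability bounded below. The probability that the $k$ points of $\supp(a)$ all lie in a common cycle of the random permutation $g$ equals $1/k$, and this probability mass is concentrated on long cycles of length close to $n$; on such cycles $m/\phi(m)$ is typically of constant order, so by the number-theoretic results on the distribution of $m/\phi(m)$ the bad set $\mathcal B_k$ carries only a small fraction of this mass, which yields a lower bound for ${\rm{P_{ins}}}$ that is uniform for bounded $k$. The main obstacle, and the delicate heart of the argument, is to make this bound uniform across the entire second regime, i.e. for all $k$ up to $(1-\delta)n$: when $k$ is large the event ``$\supp(a)$ lies in a single cycle'' has small probability $1/k$, so one cannot confine the action to one cycle and must instead analyse the (typically giant) orbit of $\langle a,g\rangle$ through $\supp(a)$ formed by the cycles of $g$ linked through $\supp(a)$, and show that the induced action there is insoluble with positive probability. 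Reconciling the number-theoretic control of $m/\phi(m)$ with this orbit analysis, and pinning down precisely the bad cycle lengths through the classification of soluble transitive groups containing a full cycle, is where the real work lies; once a uniform bound $\eta'$ is secured, the theorem follows with $\eta=\min(\gamma_\delta,\eta')$ after absorbing the finitely many small $n$ and the case $n=6$.
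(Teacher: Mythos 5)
Your first regime coincides with the paper's: for $a$ with at most $\delta n$ fixed points you invoke \cref{eta1}, exactly as the paper does. The problem is the second regime, which is where the whole theorem lives, and there your proposal has a genuine gap --- indeed two. First, the probabilistic set-up does not work: the event ``$\supp(a)$ lies in a single cycle of $g$'' has probability $1/k$, which tends to $0$ as $k=|\supp(a)|$ grows, and $k$ can be as large as $(1-\delta)n$ in this regime. You acknowledge this yourself and defer to an unspecified ``orbit analysis'' of the giant orbit through $\supp(a)$, declaring that this is ``where the real work lies.'' But that deferred work is precisely the content of the paper's \cref{eta2}; a proposal that postpones it has not proved the theorem. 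Second, the group-theoretic ``engine'' is false as stated. It is not true that a soluble transitive group of degree $m$ containing an $m$-cycle has all nontrivial elements of support of order $m(1-1/q)$, nor that the bad set consists of ``highly divisible'' $m$: for \emph{any} even $m$, the imprimitive group $C_2 \wr C_{m/2}$ (blocks $\{i,i+m/2\}$) is soluble, transitive of degree $m$, contains an $m$-cycle, and contains a transposition. More generally $C_q\wr C_{m/q}$ gives soluble examples with elements of support $q$ for every divisor $q$ of $m$. So solubility cannot be excluded by the size of $\supp(a)$ and the value of $m/\phi(m)$ alone; what matters is how the moved points sit relative to the block systems, i.e.\ the congruence classes modulo divisors of $m$.

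The paper's \cref{eta2} circumvents both problems with one device you are missing: it never tries to trap the whole support. It only asks that one moved point $1$ and its image $2=1\sigma$ lie in a common cycle of $\tau$ of length $k\geq \delta_1 n$ with $\phi(k)\geq\delta_2 k$, at a distance $i-1$ coprime to $k$. By \cref{phi} and \cref{facile} this event has probability bounded below by a constant independent of $n$ and of $|\supp(\sigma)|$. The coprimality forces $1$ and $2$ into \emph{different} blocks of any block system, so the element $\tilde\sigma$ induced on a minimal block system is nontrivial; then Jones' and Huppert's classifications restrict the soluble primitive candidates to (essentially) $\AGL(1,q)$-type groups, whose nontrivial elements fix at most $\sqrt{q}$ blocks (\cref{fpagl}). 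This bounds $\mathrm{Fix}(\sigma)$ above by roughly $n-k(1-1/\sqrt{2})\leq \delta n$, contradicting the defining hypothesis of the regime. Note the inversion of roles: the paper uses the abundance of fixed points of $\sigma$ as the \emph{weapon} that kills solubility, whereas in your sketch the many fixed points are only a nuisance that shrinks probabilities. Without this (or some comparable) idea, the uniform bound $\eta'$ you need is not secured, and the theorem is not proved.
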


\subsection*{Acknowledgements} I thank  Pablo Spiga for fruitful discussions and  his precious help in the proofs of Lemmas \ref{lemma:1} and \ref{eta1}. I would like to express my deep gratitude to the anonymous referee who
reviewed the paper. His/her precious remarks and suggestions 
have improved this paper immeasurably.

	\section{$\eta$-insoluble  characteristically simple groups}
	
The aim of this section is to prove that if the composition factors of a finite non-abelian finite characteristically simple group $G$ are $\eta$-insoluble and $\eta \leq 53/90,$ then $G$ is also $\eta$-insoluble. 
The following result, due to N. Menezes, M. Quick and C. Roney-Dougal, plays a crucial role in the proof.

\begin{thm}\cite[Theorem 1.1]{nina}\label{colva}
	Let $S$ be a finite non-abelian simple groups and $X$ an almost simple group with $\soc(X)=S.$ For every $x_1, x_2\in X$ the probalility that $(s_1,s_2)\in S\times S$ is such that $S\leq \langle s_1x_1,s_2x_2\rangle$ is at least $53/90.$
\end{thm}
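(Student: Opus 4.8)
The plan is to bound ${\rm{P_{ins}}}(\alt(n),a,b)$ below by a single positive constant, uniformly over $n\ge 5$ and over all pairs $(a,b)$ with $a\neq 1$. Since $\aut(\alt(n))=\sym(n)$ for every $n\neq 6$, and $n=6$ is a single value, I may assume $a,b\in\sym(n)$ while $s$ ranges over $\inn(\alt(n))=\alt(n)$; the finitely many small degrees (together with $n=6$) contribute a positive minimum on their own, because for each fixed $n\ge 5$ and each $a\neq 1$ one can exhibit at least one $s$ with $\langle a,bs\rangle$ insoluble, so ${\rm{P_{ins}}}>0$ there. Hence only the asymptotic regime $n\to\infty$ carries content. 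I would fix a threshold $\delta\in(0,1)$ and split according to the number of fixed points of $a$, using throughout the inequality ${\rm{P_{ins}}}(\alt(n),a,b)\ge Q(\alt(n),a,b)$ wherever convenient. In the first case, when $a$ has at most $\delta n$ fixed points, \cref{eta1} gives $Q(\alt(n),a,b)\ge \gamma_\delta$ for a constant $\gamma_\delta>0$ depending only on $\delta$, whence ${\rm{P_{ins}}}(\alt(n),a,b)\ge\gamma_\delta$. This disposes at one stroke of every $a$ with large support.

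The crux is the complementary case, in which $a$ has more than $\delta n$ fixed points; here $Q$ can be arbitrarily small, so I would bound ${\rm{P_{ins}}}$ directly by exhibiting, with probability bounded below, an insoluble section of $\langle a,bs\rangle$. Fix a point $\alpha\in\supp(a)$ (possible since $a\neq 1$) and write $c=bs$. The $c$-cycle $\Gamma$ through $\alpha$ has a length $m$ whose distribution over random $c$ is nearly uniform on $\{1,\dots,n\}$; conditioning on $m$ lying in a window $[\epsilon n,(1-\epsilon)n]$ keeps a probability bounded below, forces $m\ge 5$, and leaves at least three points outside $\Gamma$. The idea is then to promote this cycle to an actual group element: if $m$ is coprime to every other cycle length of $c$, then the power $g=c^{t}$, with $t$ the l.c.m.\ of the remaining cycle lengths, is \emph{exactly} an $m$-cycle supported on $\Gamma$ and fixing all other points, so $g\in\langle a,bs\rangle$ is a cycle with at least three fixed points.

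This is precisely where the distribution of $m/\phi(m)$ enters: the probability that the remaining cycle lengths all avoid the prime divisors of $m$ is controlled from below by a power of $\phi(m)/m$, and since $m/\phi(m)$ is bounded on a set of integers of positive density, restricting $m$ to that set keeps the combined event of positive probability. Finally, I would feed the cycle $g$ and the fixed automorphism $a$ (which moves $\alpha\in\Gamma$, so enlarges $\Gamma$ to an orbit $\Delta\supsetneq\Gamma$ with $|\Delta|\ge 5$) into Jordan's theorem on primitive groups containing a cycle that fixes at least three points: once the action on $\Delta$ is primitive, $g|_\Delta$ is a cycle fixing at least three points of $\Delta$, and the theorem forces $\langle a,bs\rangle$ to induce $\alt(\Delta)$ or $\sym(\Delta)$ on $\Delta$, hence to be insoluble. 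Taking $\eta$ to be the minimum of $\gamma_\delta$, the constant produced in the many-fixed-points case, and the positive minima over the finitely many exceptional degrees then yields the theorem.

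I expect the main obstacle to be the many-fixed-points case, and within it two intertwined difficulties. First, one must secure primitivity of the action on $\Delta$ (or otherwise rule out invariant block systems) with probability bounded below, so that Jordan's theorem can be applied to convert $g$ into a large alternating section; this is the group-theoretic pressure point, since the cyclic element $g$ alone generates nothing insoluble and it is $a$ that must break imprimitivity. Second, the number-theoretic step must be made uniform in $n$: one has to show that the proportion of admissible lengths $m$ for which $\phi(m)/m$ is bounded below — and for which the remaining cycle lengths are coprime to $m$ with non-negligible conditional probability — is itself bounded below independently of $n$. The uniform lower bound on the distribution of $m/\phi(m)$ over a positive-density set of $m$ is the arithmetic heart of the argument, and coupling it cleanly with the conditional cycle-structure estimates for the random element $bs$ is the delicate part.
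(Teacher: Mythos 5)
Your proposal does not prove the statement in question. \cref{colva} is an external result quoted verbatim from Menezes, Quick and Roney-Dougal \cite[Theorem 1.1]{nina}; the paper offers no proof of it, and it asserts something quite different from what you argue: for \emph{every} finite non-abelian simple group $S$ (not just $\alt(n)$), every almost simple $X$ with $\soc(X)=S$, and every pair $x_1,x_2\in X$, the probability that \emph{two independently random} elements $s_1,s_2\in S$ yield $S\leq \langle s_1x_1,s_2x_2\rangle$ is at least the \emph{explicit} constant $53/90$. What you sketch instead is the paper's alternating-groups theorem — that $\alt(n)$ is $\eta$-insoluble for all $n\geq 5$ — which is a different statement in at least four ways: it concerns only $\alt(n)$; it has one fixed non-identity element $a$ and only one random translate $bs$, rather than two random coset elements; it concludes mere insolubility of $\langle a,bs\rangle$ rather than generation of a group containing $S$; and it produces an unspecified $\eta>0$ rather than the sharp bound $53/90$. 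Your fixed-point dichotomy (few fixed points via \cref{eta1}, many fixed points via the distribution of $m/\phi(m)$ and a Jordan-type argument) is intrinsically tied to $\sym(n)$ and cannot yield a uniform constant across all families of simple groups, let alone the specific value $53/90$, whose derivation in \cite{nina} rests on a CFSG-based analysis of maximal subgroups of almost simple groups. There is also a structural circularity to note: in this paper \cref{colva} is an \emph{input} to \cref{riduco} (and hence to \cref{crucial} and \cref{main}), used precisely in situations where the socle is a power $S^n$ of an arbitrary simple group, so the alternating-groups theorem cannot stand in for it.

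As a secondary matter, even read as a proof of the alternating-groups theorem your sketch diverges from the paper's argument at its most delicate point and leaves it open: the paper's \cref{eta2} does not power up the cycle through $\alpha$ to an $m$-cycle and invoke Jordan's theorem; instead it selects a long cycle $(x_1,\dots,x_k)$ with $\phi(k)\geq \delta_2 k$ and a coprimality condition on the position of $2$, then runs a block-system analysis on the orbit of $1$, excluding soluble primitive quotients via Jones's classification of primitive groups with a regular cyclic subgroup, \cite[Theorem 7.4A]{dm}, Huppert's classification of soluble $2$-transitive groups, and the fixed-point bound of \cref{fpagl}. Your version leaves exactly the primitivity of the action on $\Delta$ — which you yourself flag — unestablished, and the coprimality-of-cycle-lengths event you need is not shown to have probability bounded below uniformly in $n$ (nor is the parity constraint from sampling $s\in\alt(n)$, so that $c=bs$ ranges over a coset rather than all of $\sym(n)$, addressed). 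So the attempt neither matches the quoted theorem nor closes the gaps in the alternative theorem it actually targets.
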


\begin{prop}\label{riduco}
	Let $N=S^n$ with $n\in \mathbb N$ and $S$ a finite non-abelian simple group. If $S$ is $\eta$-insoluble, then $N$ is $\tilde \eta$-insoluble, with $\tilde \eta=\min\{\eta, \frac{53}{90}\}.$
\end{prop}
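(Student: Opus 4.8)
The plan is to exploit the structure $\aut(N)=\aut(S)\wr\sym(n)=D\rtimes\sym(n)$, where $D=\aut(S)^n$ is the base group, and to reduce, via the permutation action on the $n$ simple factors, to a situation in which one of the two available tools---the $\eta$-insolubility of $S$ or \cref{colva}---applies. Write $\rho\colon\aut(N)\to\sym(n)$ for the projection onto the top group and set $\sigma=\rho(a)$, $\tau=\rho(b)$. For $m=(m_1,\dots,m_n)\in N$ chosen uniformly at random put $H=\langle a,bm\rangle$; note $\rho(H)=\langle\sigma,\tau\rangle$, since $m\in\inn(N)\le\ker\rho$. I decompose $\{1,\dots,n\}$ into orbits of $\langle\sigma,\tau\rangle$. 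Each orbit $\Delta$ yields a direct factor $S^\Delta\trianglelefteq N$ normalised by $H$, hence a homomorphic image of $H$ acting on $N/\prod_{\Delta'\ne\Delta}S^{\Delta'}\cong S^\Delta$; since the projection $m\mapsto m|_\Delta$ carries the uniform measure on $N$ to the uniform measure on $S^\Delta$, and since insolubility of a quotient forces insolubility of $H$, I obtain $\mathrm{P_{ins}}(N,a,b)\ge \mathrm{P_{ins}}(S^\Delta,a|_\Delta,b|_\Delta)$ for every orbit $\Delta$. As $a\ne1$, some orbit has $a|_\Delta\ne1$; choosing it, I may assume from now on that $\langle\sigma,\tau\rangle$ is transitive and $a\ne1$.

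Two cases are immediate. If $\langle\sigma,\tau\rangle$ is insoluble, then $H$ is insoluble for every $m$ and $\mathrm{P_{ins}}=1$. If $n=1$, then $N=S$ and the statement is exactly the $\eta$-insolubility of $S$. So assume $n\ge2$ with $\langle\sigma,\tau\rangle$ transitive and soluble. I fix the base coordinate $1$ (relabelling by a permutation, a measure-preserving operation, if convenient) and consider the restriction-to-coordinate-$1$ homomorphism $p_1$ on $H\cap D$; set $L=p_1(H\cap D)\le\aut(S)$. Because $\rho(H)$ is soluble, $H$ is insoluble iff $H\cap D$ is; because $\rho(H)$ is transitive, the coordinate projections of $H\cap D$ are all $H$-conjugate, so $H\cap D$ is insoluble iff $L$ is; and because $\out(S)$ is soluble (Schreier), $L$ is insoluble iff $L\cap\inn(S)$ is. Thus it suffices to bound from below the probability that $L$ is insoluble.

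The final and main step is to read off generators of $L$ and track the randomness. Taking Schreier generators of $H\cap D$ relative to $\langle\sigma,\tau\rangle$, the coordinate-$1$ components of the generators coming from $a$ (permutation part $\sigma$, components the fixed $\alpha_i$) are \emph{fixed}, while those coming from $bm$ (permutation part $\tau$, components $\beta_im_i$) incorporate the independent uniform factors $m_i$; along a simple loop each $m_i$ occurs once, so the corresponding element of $L$ is uniformly distributed over a coset of $\inn(S)$. I expect a dichotomy. If some $a$-loop through $1$ has nontrivial product $x\in\aut(S)$, then $L$ contains this fixed $x\ne1$ together with one uniform coset element $ys$ (with $s$ uniform in $\inn(S)\cong S$ and independent of $x$), and the $\eta$-insolubility of $S$ applied to $(x,y)$ gives $\mathrm{P_{ins}}(N,a,b)>\eta$. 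Otherwise $\langle a\rangle\cap D=1$, and using the $\langle a\rangle$-conjugates of the $bm$-loops I aim to exhibit in $L$ two \emph{independent} uniform coset elements $s_1x_1$ and $s_2x_2$, whereupon \cref{colva} (applied in $\aut(S)$, which is almost simple with socle $S$) yields $\Pr[S\le L]\ge 53/90$. In either case $\mathrm{P_{ins}}(N,a,b)\ge\min\{\eta,53/90\}=\tilde\eta$.

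The hard part will be this last step: verifying that the randomness survives the passage to the point-stabiliser projection in the required form, and in particular that, when $a$ contributes no nontrivial fixed element (i.e.\ $\langle a\rangle$ is a complement to $D$ in $\langle a\rangle D$), transitivity of $\langle\sigma,\tau\rangle$ always permits the extraction of two genuinely independent uniform coset elements from the $bm$-loops. This demands a careful choice of loops, so that distinct free factors $m_i$ are isolated, together with a check that the two resulting elements are distributed as independent uniform coset representatives, so that the hypotheses of \cref{colva} are met; the low-rank transitive cases (notably $n=2$) should also be inspected directly to confirm that the dichotomy is exhaustive.
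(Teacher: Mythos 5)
Your setup reproduces the easy part of the paper's strategy: the wreath product decomposition, the reduction to a transitive $\langle\sigma,\tau\rangle$-orbit, the passage to a point stabilizer, and the intended dichotomy (a fixed nontrivial element of $\aut(S)$ paired with one uniform coset element $\Rightarrow$ $\eta$-insolubility of $S$; two jointly uniform coset elements $\Rightarrow$ Theorem~\ref{colva}). But the proof is incomplete precisely where the proposition is hard, and the mechanism you sketch for that case fails. Suppose, as in your case (ii), that $\langle a\rangle\cap D=1$; after the paper's preliminary reduction to $|a|=p$ prime one may even assume $a=\sigma$ is a pure permutation, with $\langle \sigma,\tau\rangle$ transitive and soluble. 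The decisive sub-case is when every $\sigma$-orbit is contained in a $\tau$-orbit (in the paper's notation, $\mathcal A$ refines $\mathcal B$); transitivity then forces $\tau$ to be an $n$-cycle. In that situation every $bm$-loop based at the point $1$, and every $\langle a\rangle$-conjugate of such a loop, has coordinate-$1$ projection equal to a power of a cyclic rotation of the single word $z_1z_2\cdots z_n$ (where $z_i=s_ik_i$). Any two such rotations are conjugate to one another by an initial subword, so for instance they always have the same order; in particular no pair of them is uniformly distributed on a product of two cosets of $S$, and Theorem~\ref{colva} cannot be applied to them. So \lq\lq using the $\langle a\rangle$-conjugates of the $bm$-loops\rq\rq\ cannot produce the two independent elements you need, exactly in the main case. (A smaller repair: your fixed element $\phi_1(a^k)$ in case (i) need not lie in $L=p_1(H\cap D)$, since $a^k\notin D$; one must work with $H\cap X_1$ and its image under $\phi_1$, as the paper does, rather than with $H\cap D$.)

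What fills this gap in the paper is a set of ideas absent from your proposal: (a) one may replace $b$ by $ba^i$, so it can be assumed that $\tau\sigma^i$ is an $n$-cycle for \emph{every} $i$ (this substitution already disposes of cases like $\sigma\in\langle\tau\rangle$, by throwing them back into the non-refinement case where two loops with disjoint variable supports exist); (b) one passes to a minimal block system and invokes Jones's theorem \cite{jon}, so that the induced soluble primitive group on blocks either lies in $\AGL(1,t)$ with $t$ prime, or has degree $t=4$; (c) when the induced permutation $\tilde\sigma$ on blocks is trivial, the paper exhibits \emph{no} pair of independent elements at all --- it applies the proposition inductively to the action on a block $S^{C_1}$; (d) when $t=4$, it constructs an explicit pair of mixed words, $((xb)^2a)^l$ and $(((xb)^2a)^m)^a$, whose coordinate-$1$ projections involve disjoint sets of variables, and only then applies Theorem~\ref{colva}. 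Point (c) shows in particular that your expected dichotomy is not something one can take for granted: the paper itself never proves it, and circumvents it by induction. Without the substitution trick, Jones's classification, and the inductive treatment of the block case, the transitive soluble case --- which you correctly identify as \lq\lq the hard part\rq\rq\ --- remains unproven, so the proposal as it stands does not establish the proposition.
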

\begin{proof}
	We identify $\aut(N)$ with the wreath product $\aut(S)\wr \perm(n).$  In this identification, any element of $\aut(N)$ can be written in the form $(a_1,\dots,a_n)\rho,$ with $a_1,\dots,a_n \in \aut(S)$ and $\rho \in \perm(n).$ For $1\leq i\leq n,$ let $$X_i=\{(a_1,\dots,a_n)\rho \in \aut(S) \wr \perm(n) \mid i\rho=i\}.$$ The map $\phi_i: X_i\to S$ sending $(a_1,\dots,a_n)\rho$ to $a_i$ is a group homomorphism. Let $a, b \in \aut(N),$ with $a\neq 1.$ We want to prove that the set $$\Omega_{a,b}=\{x\in N \mid \langle a, xb \rangle \text { is insoluble}\}$$ has cardinality at least $\tilde \eta |N|.$ Since
	$\Omega_{a^r,b}\subseteq \Omega_{a,b}$ for every $r\in \mathbb Z,$ it is not restrictive to assume that $|a|=p$ is a prime. 
	
	Let $a=(h_1,\dots,h_n)\sigma$ and $b=(k_1,\dots,k_n)\tau.$

	First assume that there exists $1\leq i\leq n$ such that $i\sigma = i$ and $h_i\neq 1.$ It is not restrictive to assume that $i=1$ and that $\tau$ acts on the orbit of 1 as the cycle $(1,u_2,\dots, u_c)$. Let $x=(s_1,\dots,s_n)\in S^n.$ Notice that $a, (xb)^c \in X_1$, with $\phi_1(a)=h_1$ and $\phi_1((xb)^c)=s_1k_1s_{u_2}k_{u_2}\dots s_{u_c}k_{u_c}.$  Since $S$ is $\eta$-insoluble, for every choice of $s_2,\dots,s_n\in S$ there are at least $\eta|S|$ choices for $s_1$ such that
	$\langle h_1, s_1k_1s_{u_2}k_{u_2}\dots s_{u_c}k_{u_c}\rangle$ is insoluble.
	This implies that there are at least $\eta|N|$ choices of $x=(s_1,\dots,s_n)$
	such that $\phi_1(\langle a, (xb)^c\rangle),$ and consequently $\langle a, xb \rangle$, is insoluble. Hence in this case $|\Omega_{a,b}|\geq \eta|N|.$
	
		So it is not restrictive to assume $\sigma=\sigma_1\cdots \sigma_r$ with $\sigma_i=(p\cdot(i-1)+1,\dots,p\cdot i)$ and
	$h_j=1$ whenever $j>p\cdot r.$ Since $|a|=p,$ 
	$h_{p(i-1)+1}\cdots h_{pi}=1$ for every $1\leq i\leq r$ and, if
	$\xi=  (h_1\cdots h_r,h_2\cdots h_r,\dots,h_r,\dots,h_{p(r-1)+1}\cdots h_{pr},h_{p(r-1)+2}\cdots h_{pr},\dots h_{pr}
	),$ then
	$a^\xi=\sigma.$ So, up to conjugation in $\aut(S^n),$ we may assume $a=\sigma.$

Let $\Delta$ be an orbit of $\langle \sigma, \tau\rangle$ on $\{1,\dots,n\}$. Then  $M=S^\Delta\leq N$ is a minimal normal subgroup of $X=\langle a, b\rangle M$ and $X/C_X(M)$ is a monolithic group with socle isomorphic to $M.$ If $|\Delta|=e< n,$ then, by induction, the probability that $m\in M$ is such that $\langle a, mb\rangle C_X(M)$ is insoluble is at least $\tilde \eta$, and this  immediately implies that $|\Omega_{a,b}|\geq \tilde \eta|N|.$ Thus we may assume that $\langle \sigma, \tau \rangle$ is a transitive subgroup of $\perm(n).$

Let $\mathcal A=\{A_1,\dots,A_{\nu}\}$ and $\mathcal B=\{B_1,\dots,B_{\mu}\}$ be the sets, respectively, of the orbits of $\sigma$ and $\tau$ on $\{1,\dots,n\}.$
	
	First suppose $\mathcal A$ is not a refinement of $\mathcal B$. In particular  there exist $i$ and $j_1\neq j_2$ such that $A_i\cap B_{j_1}\neq \emptyset$ and $A_i\cap B_{j_2}\neq \emptyset.$  We may assume that $\tau$ acts on $B_{j_1}$ as the cycle
	$(v_1,\dots,v_{c_1})$ and on $B_{j_2}$ as the cycle $(w_1,\dots,w_{c_2}),$ and that $v_1=w_1{\sigma^k}$ for some $k\in \mathbb N.$ 
	Then, for any $x=(s_1,\dots,s_n) \in N,$
	$\{(xb)^{c_1}, ((xb)^{c_2})^{a^k}\} \subseteq  X_{v_1}$, with $\phi_i((xb)^{c_1})=s_{v_1}k_{v_1}\cdots s_{v_{c_1}}k_{v_{c_1}}$ and $\phi_i(((xb)^{c_2})^{a^k})=s_{w_1}k_{w_1}\cdots s_{w_{c_2}}k_{w_{c_d}}.$ We fix arbitrarily
	$s_i$ for $i\notin \{v_1,w_1\}.$ By Theorem \ref{colva}, the probability that $s_{v_1}, s_{w_1}$ are such that  $\langle s_{v_1}k_{v_1}\cdots s_{v_{c_1}}k_{v_{c_1}}, s_{w_1}k_{w_1}\cdots s_{w_{c_2}}k_{w_{c_2}}\rangle$ contains $S$ is at least $53/90|S|^2.$ It follows that
	$|\Omega_{a,b}|\geq 53/90|N|.$
	
	So we may assume  that $\mathcal A$ is a refinement of $\mathcal B$. Since $\langle \sigma, \tau \rangle$ is transitive, we deduce that $\tau$ is an $n$-cycle.
	More in general, we may assume  that $\tau\sigma^i$ is an $n$-cycle for any $i\in \mathbb Z$ (otherwise we substite $b$ with $ba^i$ and we conclude with the previous arguments). Moreover we may assume that $\langle \sigma, \tau \rangle$ is soluble, otherwise $\Omega_{a,b}=N.$ It is not restrictive to assume $\tau=(1,2,\dots,n).$  Let $\mathcal C=\{C_1,\dots,C_t\}$ be a system of blocks for $\langle \sigma, \tau \rangle$ with $1\neq t$ as small as possible (we don't exclude $u=n/t=1$). 
	Thus  $\langle \sigma, \tau \rangle$ permutes primitively the blocks of $\mathcal C.$ Since $\mathcal C$ is preserved by $\tau,$ we have
	$$\begin{aligned}
		C_1&=\{1,t+1,\dots,t(u-1)+1\},\\
		C_2&=\{2,t+2,\dots,t(u-1)+2\},\\
		&\cdots \quad \cdots \quad \cdots \quad \cdots \quad \cdots\\
		C_t&=\{t,t+t,\dots,t(u-1)+t\}.\\
	\end{aligned}$$
	Denote by $\tilde \sigma$ and $\tilde \tau$ the permutations induced by $\sigma$ and $\tau$ on the set of the blocks and let $\Sigma=\langle \tilde \sigma, \tilde \tau\rangle \leq \perm(t).$ Since $\Sigma$ is a soluble primitive subgroup of $\perm(t)$ containing  a  regular cyclic subgroup $\langle \tilde \tau \rangle$ , by \cite[Theorem 3]{jon},
	either either $t = 4$ or $t$ is prime and $\langle \tilde \tau \rangle$ is the socle of $\AGL(1,t)\geq \Sigma.$
We are assuming that $\tilde \tau \tilde \sigma^i$ is a $t$-cycle for every $i \in \mathbb Z$. If $\AGL(1,t)\geq \Sigma,$ then this is possible only if $\tilde \sigma=1.$ If $t=4,$ then we need $C_j$ and $C_{j+1}$ to be in different $\langle \tilde \sigma \rangle$-orbits for all $1 \leq j \leq 4,$ and that forces $\tilde \sigma$ to be a power of
$(\tilde \tau^2).$
	
	If $\tilde \sigma=1,$ then $a$ and $(xb)^t$ normalize $M=S^{C_1}$ and acts on $M$, respectively as $\alpha\in \perm(u)$ and $(s_1k_1\cdots s_tk_t,\dots,s_ {t(u-1)+1}k_ {t(u-1)+1}\cdots s_ {t(u-1)+t}k_ {t(u-1)+t})\beta$, where $\beta=(1,t+1,\dots,t(u-1)+1)$.
	Let $y=(s_1,s_{t+1},\dots,s_{t(u-1)+1})\in S^t$ and
	$$b^*=(k_1s_2k_2\cdots s_tk_t,\dots,k_ {t(u-1)+1}s_{t(u-1)+2}k_ {t(u-1)+2}\cdots s_ {t(u-1)+t}k_ {t(u-1)+t})\beta.$$
	By induction, for any choice of $s_2,\dots,s_u,s_{u+2},\dots, s_ {t(u-1)+1},$
	the number of choices of $s_1,s_{t+1},\dots,s_{t(u-1)+1}$ with the property that $\langle \alpha, yb^*\beta \rangle$  is insoluble is at least $\tilde \eta|S|^u$. Hence we conclude that the number of $x$ so that $\langle a, (xb)^t\rangle$ is
	insoluble is at least $\tilde \eta|N|.$
	
	We remain with the case when $t=4$ and $\tilde \sigma$ exchanges $B_1$ with $B_3$ and $B_2$ with $B_4.$
	Notice that 
	$$(xb)^2a=(z_1,\dots,z_n)\rho,$$
	where $\rho=\rho_1\cdots\rho_f \in \perm(n)$ is a product of disjoint cycles of  length at most $n/4$ (since $\rho$ stabilizes all the blocks $C_j$) and $z_i=s_{i}k_{i}s_{i+1}k_{i+1}.$ We may
	assume $\rho_1=(1,v_2,\dots,v_l)$ with $\{1,v_2,\dots,v_l\}\subseteq C_1$ and
	$\rho_2=(j,w_2,\dots,w_m)$ with $1=j\sigma$ and $\{j,w_2,\dots,w_m\}\subseteq C_3.$ Notice that
	$$\begin{aligned}\phi_1(((xb)^2a)^l)&=s_1k_1s_2k_2s_{v_2}k_{v_2}s_{v_2+1}k_{v_2+1}\cdots 
		s_{v_l}k_{v_l}s_{v_l+1}k_{v_l+1},\\
		\phi_1((((xb)^2a)^m)^a)&=s_jk_js_{j+1}k_{j+1}s_{w_2}k_{w_2}s_{w_2+1}k_{w_2+1}\cdots 
		s_{w_m}k_{w_m}s_{w_m+1}k_{w_m+1}.
	\end{aligned}$$
	Since $$j\notin \{1,2,v_2,v_{2}+1,\dots,v_l,v_l+1\} \text { and } 1\notin \{j,j+1,w_2,w_{2}+1,\dots,w_m,w_m+1\},$$
	for any choice of $(s_2,\dots,s_{j-1},s_{j+1},\dots,s_n)$ there are at least $53/90|S|^2$
	choices for $(s_1,s_j)$ so that $\langle \phi_1(((xb)^2a)^l), \phi_1((((xb)^2\sigma)^m)^a)
	\rangle$ is insoluble. Hence 	$|\Omega_{a,b}|\geq 53/90|N|.$
\end{proof}

\section{Alternating groups}

	The aim of this section is to prove that there exists a constant $c>0$ such that $\alt(n)$ is $c$-insoluble for every $n\geq 5.$
	The first part of the section, culminating with \cref{eta1}, estimate $P_{\rm{ins}} (\mathrm{Alt}(n), \sigma,\tau) \ge \eta_{1,\delta}$ when $\sigma$ has \lq few\rq \ fixed points.

\begin{lemma}\label{lemma:-1}
	Let $n$ be a positive integer and let $a$ be a non-negative integer with $a< n$. Then
$$\sum_{x=0}^{a}\frac{(n-x-1)!}{(a-x)!}=\frac{n!}{a!(n-a)}.$$
\end{lemma}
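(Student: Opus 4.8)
The plan is to prove an equivalent, more symmetric identity obtained by reversing the order of summation. Substituting $j=a-x$ and writing $m=n-a$, the left-hand side becomes
$$\sum_{j=0}^{a}\frac{(m+j-1)!}{j!},$$
while the claimed value $\frac{n!}{a!(n-a)}$ becomes $\frac{(m+a)!}{m\cdot a!}$. Thus it suffices to show that for all integers $m\geq 1$ and $a\geq 0$,
$$g(m,a):=\sum_{j=0}^{a}\frac{(m+j-1)!}{j!}=\frac{(m+a)!}{m\cdot a!}.$$
The advantage of this reparametrisation is that the free parameter $m=n-a$ stays fixed as $a$ grows, whereas in the original variables increasing $a$ by one forces $n$ to change as well; working with the pair $(m,a)$ makes a clean one-variable induction on $a$ available. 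Note that the correspondence $(n,a)\mapsto(m,a)=(n-a,a)$ is a bijection between the pairs satisfying $0\leq a<n$ and those satisfying $m\geq 1$, $a\geq 0$, so the two formulations are genuinely equivalent.

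First I would dispose of the base case $a=0$, where $g(m,0)=(m-1)!=\frac{m!}{m}$, matching the right-hand side. For the inductive step I would peel off the top term via the trivial recurrence
$$g(m,a)=g(m,a-1)+\frac{(m+a-1)!}{a!}.$$
Feeding in the inductive hypothesis $g(m,a-1)=\frac{(m+a-1)!}{m\,(a-1)!}$ and factoring out $\frac{(m+a-1)!}{a!}$ reduces the step to the elementary identity $\frac{a}{m}+1=\frac{m+a}{m}$, which produces $\frac{(m+a-1)!\,(m+a)}{m\,a!}=\frac{(m+a)!}{m\,a!}$, as required.

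Alternatively, and perhaps more transparently, one can avoid induction entirely: writing $\frac{(m+j-1)!}{j!}=(m-1)!\binom{m+j-1}{j}$ exhibits $g(m,a)$ as $(m-1)!$ times the diagonal sum $\sum_{j=0}^{a}\binom{m+j-1}{j}$ of binomial coefficients. The hockey-stick identity collapses this to $(m-1)!\binom{m+a}{a}=\frac{(m+a)!}{m\cdot a!}$, giving the same answer. I expect no serious obstacle, as this is an elementary finite identity; the only points requiring a little care are the reindexing step and the observation that one should induct with $m$, rather than $n$, held fixed.
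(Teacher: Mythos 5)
Your proof is correct. The inductive argument you give is, once the substitution $j=a-x$, $m=n-a$ is undone, exactly the paper's own proof: the paper inducts on $n$, peels off the $x=0$ term $\frac{(n-1)!}{a!}$, and applies the inductive hypothesis to the pair $(n-1,a-1)$ --- which is precisely your step from $(m,a-1)$ to $(m,a)$, since that reduction holds $n-a$ fixed, and the term you peel (the $j=a$ term) is the same $x=0$ term. Your reparametrisation is nonetheless a real improvement in bookkeeping: the paper states its hypothesis as ``the result is true for $n-1$ and $a$'' but then invokes it for $n-1$ and $a-1$, and it must treat the boundary case $a=n-1$ separately; your one-variable induction on $a$ with $m\geq 1$ fixed removes both wrinkles at once. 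Your second, non-inductive argument is genuinely different from anything in the paper: writing $\frac{(m+j-1)!}{j!}=(m-1)!\binom{m+j-1}{j}$ and collapsing the sum via the hockey-stick identity $\sum_{j=0}^{a}\binom{m+j-1}{j}=\binom{m+a}{a}$ identifies the lemma as a disguised standard binomial identity rather than an ad hoc factorial computation, and is arguably the most transparent route of the three.
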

\begin{proof}
We argue by induction on $n$. Observe that, when $a=0$, the lemma is readily seen to be true, for every value of $n$. Suppose now that the result is true for $n-1$ and $a$, we show that it is also true for $n$ and $a$. Indeed if $a<n-1$, arguing inductively, we obtain
\begin{align*}
\sum_{x=0}^{a}\frac{(n-x-1)!}{(a-x)!}&=
\frac{(n-1)!}{a!}+\sum_{x=1}^a\frac{((n-1)-x)!}{(a-x)!}=\frac{(n-1)!}{a!}+\sum_{x=0}^{a-1}\frac{((n-1)-x-1)!}{(a-1-x)!}\\
&=\frac{(n-1)!}{a!}+\frac{(n-1)!}{(a-1)!((n-1)-(a-1))}=\frac{n!}{a!(n-a)}.
\end{align*}

The only case that remains to be discussed is when $a=n-1$. We have
\begin{align*}
\sum_{x=0}^{a}\frac{(n-x-1)!}{(a-x)!}&=\sum_{x=0}^{n-1}1=n-1=\frac{n!}{a!(n-a)}.\qedhere
\end{align*}
\end{proof}

The next lemma has an elegant and surprising conclusion: if we take some proportion $p = |A|/|\Omega|$ of the elements of a finite set, and then pick a random permutation $\tau$ of $\Omega$, the probability that $A$ contains a $\langle \tau \rangle$-orbit is precisely $p.$ The proof is elementary but we didn't find a reference to this fact.

\begin{lemma}\label{lemma:0}Let $\Omega$ be a set and let $A$ be a subset of $\Omega$. Consider the following subset of $\perm(\Omega):$
$$\iota_\Omega(A)=\{\tau\in\mathrm{Sym}(\Omega)\mid \textrm{there exists }B\subseteq A \hbox{ with }B\ne \emptyset \hbox{ and }B^\tau=B\}.$$
Then 	$|\iota_\Omega(A)|=(|\Omega|-1)!|A|.$
		Moreover if $|\Omega|-|A|\geq 2,$ then
	$\iota_\Omega(A)$ contains the same number of even and odd permutations.
\end{lemma}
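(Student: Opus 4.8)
I would first restate the membership condition combinatorially. A nonempty $B\subseteq A$ with $B^\tau=B$ is exactly a nonempty union of $\langle\tau\rangle$-orbits contained in $A$, so $\tau\in\iota_\Omega(A)$ if and only if at least one cycle of $\tau$ lies entirely inside $A$. Writing $n=|\Omega|$, $a=|A|$, and $B=\Omega\setminus A$ with $m=|B|=n-a$, the complement of $\iota_\Omega(A)$ in $\perm(\Omega)$ is the set of permutations every cycle of which meets $B$. Thus the whole first assertion reduces to the clean claim that the number of permutations of $\Omega$ in which every cycle contains a point of $B$ equals $m\,(n-1)!$, since then $|\iota_\Omega(A)|=n!-m(n-1)!=(n-a)(n-1)!=|A|\,(|\Omega|-1)!$. (One can also attack this by summing, over the length $\ell$ of a distinguished cycle, the probability that it is contained in $A$, which is where an identity like \cref{lemma:-1} enters; but that route naturally computes the \emph{expected number of points lying in an $A$-cycle} rather than the probability that such a cycle exists, so some care is needed. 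I prefer the bijective argument below.)

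To count the complement I would use the sequential cycle-insertion construction (the ``Chinese restaurant'' bijection). Fix a total order $\prec$ on $\Omega$ placing all elements of $B$ before all elements of $A$, and insert the elements in increasing $\prec$-order; at each step the incoming element either opens a new singleton cycle or is spliced in immediately after one of the already-inserted points. This is the standard bijection realising all $n!$ permutations, the product of the numbers of choices being $1\cdot2\cdots n$. The key structural fact is that cycles never merge during this process, so each cycle of the final permutation is created exactly once, at the insertion of its $\prec$-minimal element; equivalently, an element opens a new cycle if and only if it is the $\prec$-minimum of its eventual cycle. Because $B$ precedes $A$, a cycle meets $B$ precisely when its $\prec$-minimum lies in $B$, i.e. when no element of $A$ opened it. Hence the permutations with every cycle meeting $B$ are exactly those produced when we forbid each $A$-element from opening a new cycle. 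Counting the allowed choices, the $m$ elements of $B$ are inserted freely ($m!$ ways), while inserting the $j$-th element of $A$ (with $m+j-1$ points already present) must splice it after an existing point, giving $m+j-1$ choices; the total is $m!\,\prod_{j=1}^{a}(m+j-1)=m\,(m+a-1)!=m\,(n-1)!$, as required. The step deserving the most care, and the main obstacle, is the precise justification that this restricted process is a bijection onto exactly the target set (the non-merging of cycles and the ``opens a cycle $\iff$ is the $\prec$-minimum'' dictionary); the arithmetic is then routine.

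For the parity statement, assume $|\Omega|-|A|=m\ge 2$ and fix two distinct marked points $b_1,b_2\in B$. The map $\tau\mapsto (b_1\,b_2)\,\tau$ is a fixed-point-free, sign-reversing involution of $\iota_\Omega(A)$, and pairing $\tau$ with its image then forces equally many even and odd permutations in $\iota_\Omega(A)$. It is clearly an involution that reverses sign, since $(b_1\,b_2)$ is an odd involution. It remains to see that it preserves $\iota_\Omega(A)$: left multiplication by $(b_1\,b_2)$ alters only the images of $\tau^{-1}(b_1)$ and $\tau^{-1}(b_2)$, and any cycle $C\subseteq A$ of $\tau$ avoids $b_1,b_2$ and their $\tau$-preimages (if $\tau^{-1}(b_i)\in C$ then $b_i\in C\subseteq A$, a contradiction). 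Hence $(b_1\,b_2)\tau$ agrees with $\tau$ on $C$ and sends no outside point into $C$, so $C$ survives as a cycle of $(b_1\,b_2)\tau$; thus the image again has a cycle inside $A$ and lies in $\iota_\Omega(A)$, completing the argument.
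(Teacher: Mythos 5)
Your proof is correct, and the counting half takes a genuinely different route from the paper's. The paper proceeds by induction on $|A|$: it partitions $\iota_\Omega(A)$ according to the maximal $\tau$-invariant subset $B\subseteq A$ (the sets $\kappa_\Omega(A,B)$), computes each piece inductively, and then needs the summation identity of \cref{lemma:-1} to collapse the resulting sum to $(|\Omega|-1)!\,|A|$. You instead count the complement (permutations all of whose cycles meet $\Omega\setminus A$) in closed form via the sequential cycle-insertion bijection, which eliminates both the induction and \cref{lemma:-1} entirely; the price is that you must invoke (and, as you rightly flag, carefully justify) the standard facts that the insertion process is a bijection, that cycles never merge, and that a cycle's opener is its $\prec$-minimal element --- all of which you state correctly, so there is no gap, only a reliance on a well-known construction rather than on self-contained elementary steps. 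For the parity claim, your argument and the paper's share the same key idea (a sign-reversing involution given by left multiplication by a transposition of two points outside $A$, which cannot disturb any cycle contained in $A$), but yours is cleaner: you apply the involution directly to $\iota_\Omega(A)$, whereas the paper routes it through the $\kappa_\Omega(A,B)$ decomposition, applying the transposition trick to $\iota_{\Omega\setminus B}(A\setminus B)$ piece by piece --- a detour your global argument shows to be unnecessary. One stylistic caution: you recycle the letter $B$ for $\Omega\setminus A$, while the statement (and the paper) uses $B$ for subsets of $A$; if your text were merged into the paper this clash should be repaired.
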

\begin{proof}
	We argue by induction on $|A|$. When $A=\emptyset$, $A$ has no proper subset and $|A|=0$; thus the result holds true.
	
	Suppose now that $|A|>0.$ For every $\emptyset \neq B\subseteq A,$ let
	$\kappa_\Omega(A,B)$ be the set of the permutations $\sigma\in \perm(\Omega)$ such that $B^\sigma=B$ and there exists no $C\subseteq A\setminus B$ with $C\neq \emptyset$ and $C^\sigma=C.$
	If $\sigma \in \kappa_\Omega(A,B),$ then $\sigma=\sigma_1\sigma_2,$ with $\sigma_1\in \perm(B)$, $\sigma_2\in \perm(\Omega\setminus B)$ and $\sigma_2\notin \iota_{\Omega\setminus B}(A\setminus B).$ By induction, we have
	$$|\kappa_\Omega(A,B)|=|B|!|\Omega-B|!\left(1-\frac{|A|-|B|}{|\Omega|-|B|}\right).$$
	Let $|\Omega|=n$ and $|A|=a.$ Using Lemma \ref{lemma:-1}, we conclude
	$$\begin{aligned}
		|\iota_\Omega(A)|&=\sum_{\emptyset \neq B\subseteq A}|\kappa_\Omega(A,B)|= \sum_{1\leq x\leq a}\binom a x x!(n-x)!\left(1-\frac{a-x}{n-x}\right)\\
		&=a!(n-a)\sum_{1\leq x \leq a}\frac{(n-x-1)!}{(a-x)!}=
		a!(n-a)\left(\frac{n!}{a!(n-a)}-\frac{(n-1)!}{a!}\right)\\&=(n-1)!a  = (|\Omega|-1)!|A| .
	\end{aligned}
	$$
	
Now we want to prove that if $n-a\geq 2,$ then $$|\iota_\Omega(A)\cap \alt(\Omega)|=
|\iota_\Omega(A)\cap (\perm(\Omega)\setminus \alt(\Omega))|=(n-1)!a/2.$$	Since $\iota_\Omega(A)$ is the disjoint union of the subsets $\kappa_\Omega(A,B)$, it suffices to prove that $|\kappa_\Omega(A,B)\cap \alt(\Omega)|=|\kappa_\Omega(A,B)|/2$
for every $\emptyset \neq B\subseteq \Omega$. On the other hand, as we noticed before, $\kappa_\Omega(A,B)=\perm(B)\times (\perm(\Omega\setminus B)\setminus \iota_{\Omega\setminus B}(A\setminus B)).$ So it suffices to prove that $\iota_{\Omega\setminus B}(A\setminus B)$ contains the same number of even and odd permutations. This follows from the following remark. Let $y_1, y_2$ be two distinct elements of $\Omega\setminus A.$ Since the transposition $(y_1,y_2)$ maps to themselves all the subsets of $A\setminus B,$ an element $\sigma\in \perm(\Omega\setminus B)$ belongs to $\iota_{\Omega\setminus B}(A\setminus B)$  if and only if $(y_1,y_2)\sigma$ belongs to $\iota_{\Omega\setminus B}(A\setminus B).$ 
\end{proof}

To continue our arguments, we now need to recall an important result of Babai and Hayes~\cite{BH}.
\begin{lemma}[Corollary~9,~\cite{BH}]\label{lemma:1-} Let $G$ be a permutation group of degree $n$ with no fixed
	points and  let $\sigma \in \mathrm{Sym}(n)$ be chosen at random. Then the probability that $G$ and
	$\sigma$ do not generate a transitive group is less than $1/n + O(1/n^2 )$.
\end{lemma}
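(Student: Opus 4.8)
The plan is to reduce transitivity of $\langle G,\sigma\rangle$ to the existence of a $\sigma$-invariant union of $G$-orbits, and then bound the failure probability by a union bound whose dominant contribution comes from the orbits of size $2$.

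First I would record the combinatorial reduction. Write $\Omega=\{1,\dots,n\}$ and let $O_1,\dots,O_k$ be the orbits of $G$; since $G$ has no fixed points, each $n_i:=|O_i|$ is at least $2$. A subset of $\Omega$ is invariant under $\langle G,\sigma\rangle$ exactly when it is a union of $G$-orbits and is $\sigma$-invariant, so $\langle G,\sigma\rangle$ is intransitive if and only if some proper non-empty orbit-union $\Delta$ satisfies $\sigma(\Delta)=\Delta$. Passing to the complement when necessary, I restrict attention to those $\Delta$ with $|\Delta|\le n/2$, which counts each invariant partition $\{\Delta,\Omega\setminus\Delta\}$ only once. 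Since exactly $m!(n-m)!$ permutations fix a given $m$-set setwise, one has $\Pr[\sigma(\Delta)=\Delta]=1/\binom{n}{|\Delta|}$.

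I would then union-bound over these $\Delta$, organised by the number $j$ of orbits they contain. The terms with $j=1$ give the main contribution: there are at most $n/2$ orbits of size $2$, each contributing $1/\binom{n}{2}$, and at most $n/3$ orbits of size $\ge 3$, each contributing at most $1/\binom{n}{3}$, so the single-orbit total is at most $\frac{n/2}{\binom{n}{2}}+\frac{n/3}{\binom{n}{3}}=\frac{1}{n-1}+O(1/n^2)=\frac1n+O(1/n^2)$, the leading term $1/n$ being forced solely by the orbits of size $2$. A union of $j\ge 2$ orbits has size at least $2j$, there are at most $\binom{k}{j}\le\binom{n/2}{j}$ of them, and each is $\sigma$-invariant with probability at most $1/\binom{n}{2j}$; since $|\Delta|\le n/2$ forces $j\le n/4$, the whole multi-orbit contribution is at most $\sum_{2\le j\le n/4}\binom{n/2}{j}/\binom{n}{2j}$.

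The crux — and the step I expect to be most delicate — is showing that this last sum is $O(1/n^2)$, since no single clean bound on the summand works across the whole range $2\le j\le n/4$. For $j\le n/8$ the ratio of consecutive summands stays below a fixed constant less than $1$ (it is of order $j/n$ when $j=o(n)$), so the terms decay geometrically from the $j=2$ term, which is itself $O(1/n^2)$. For $n/8<j\le n/4$ the polynomial estimate degrades, but there $\binom{n}{2j}$ is exponentially large while $\binom{n/2}{j}\le 2^{n/2}$, so each such summand, and hence the whole remaining tail, is exponentially small and absorbed into $O(1/n^2)$. Adding the main term to this $O(1/n^2)$ bound yields $\Pr[\langle G,\sigma\rangle\text{ not transitive}]<1/n+O(1/n^2)$, as required; taking $G=(\mathbb Z/2\mathbb Z)^{n/2}$ acting as $n/2$ disjoint transpositions shows that the coefficient of $1/n$ is best possible.
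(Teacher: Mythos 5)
Your proof is correct, but note that the paper itself contains no proof of this lemma: it is imported verbatim as Corollary~9 of Babai--Hayes~\cite{BH} and used as a black box, so your argument supplies a self-contained proof where the paper supplies only a citation (and it is the natural first-moment argument, close in spirit to what Babai and Hayes themselves do). The step you flagged as delicate does go through, and in fact more cleanly than your two-range split suggests: writing $a_j=\binom{n/2}{j}\big/\binom{n}{2j}$, one computes exactly
\[
\frac{a_{j+1}}{a_j}=\frac{n/2-j}{j+1}\cdot\frac{(2j+1)(2j+2)}{(n-2j)(n-2j-1)}=\frac{2j+1}{n-2j-1},
\]
which is indeed of order $j/n$ for $j=o(n)$ and at most $1/2$ for all $j\le n/8$ once $n$ is large, confirming geometric decay from $a_2=\binom{n/2}{2}\big/\binom{n}{4}=\tfrac{3}{(n-1)(n-3)}=O(1/n^2)$; moreover the ratio stays below $1$ for all $j<(n-2)/4$, so the summands are monotone decreasing across the whole range and every term with $j>n/8$ is at most $a_2\,2^{-(n/8-2)}$ --- hence the tail $n/8<j\le n/4$ is exponentially small with no need for your entropy comparison of $\binom{n/2}{j}\le 2^{n/2}$ against $\binom{n}{\lceil n/4\rceil}$ (though that estimate is also valid, since the binary entropy $H(1/4)\approx 0.811>1/2$). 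The remaining ingredients are all sound: intransitivity of $\langle G,\sigma\rangle$ is equivalent to the existence of a nonempty proper $\sigma$-invariant union of $G$-orbits; complementation lets you assume $|\Delta|\le n/2$ (the possible double count when $|\Delta|=n/2$ exactly only inflates an upper bound); $\Pr[\Delta^\sigma=\Delta]=1/\binom{n}{|\Delta|}$ exactly; monotonicity of $\binom{n}{m}$ for $m\le n/2$ justifies both $1/\binom{n}{m}\le 1/\binom{n}{3}$ for single orbits of size $\ge 3$ and $1/\binom{n}{|\Delta|}\le 1/\binom{n}{2j}$ for $j$-orbit unions; and the counts \lq\lq at most $n/2$ orbits of size $2$\rq\rq\ and \lq\lq at most $n/3$ orbits of size $\ge 3$\rq\rq, while not simultaneously attainable, are legitimate as separate union-bound estimates. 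Your closing example is also genuinely a sharpness witness: with $n/2$ disjoint transpositions the expected number of stabilized $2$-orbits is $(n/2)/\binom{n}{2}=1/(n-1)$, and a second-moment or inclusion--exclusion step converts this into a matching lower bound of order $1/n$.
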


\begin{lemma}\label{lemma:1}Let $G$ be a non-trivial subgroup of the symmetric group  $\mathrm{Sym}(n)$ of degree $n$ with $f$ fixed points. Fix $\rho\in \mathrm{Sym}(n)$ and  let $\sigma \in \rho\mathrm{Alt}(n)$ be chosen at random. Then the probability that $G$
and $\sigma$ do not generate a transitive group is less than $$\frac{f}{n} +\frac{2}{n-f}+O(1/(n-f)^2 ).$$
In particular, if $0\le\delta\in\mathbb{R}$ with $\delta<1$ and $f\le\delta n$,  then the probability that $G$ and $\sigma$ not generate a transitive
group is at most
$$\delta +O(1/n ).$$
\end{lemma}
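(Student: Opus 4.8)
The plan is to peel off the fixed points of $G$ and reduce to the fixed-point-free situation covered by Babai--Hayes. Write $\Omega=\{1,\dots,n\}$, let $F\subseteq\Omega$ be the set of points fixed by every element of $G$, so that $|F|=f$, and put $\Delta=\Omega\setminus F$ with $m=|\Delta|=n-f$. Since $G\neq 1$ it moves at least two points, so $m\ge 2$; moreover $G$ acts faithfully on $\Delta$ with no fixed points, because it fixes $F$ pointwise. The starting observation is that a nonempty proper $\langle G,\sigma\rangle$-invariant set either lies inside $F$ or meets $\Delta$, which suggests splitting the bad event as
$$\{\langle G,\sigma\rangle\text{ intransitive}\}\subseteq\{\sigma\in\iota_\Omega(F)\}\cup\{\sigma\notin\iota_\Omega(F)\text{ and }\langle G,\sigma\rangle\text{ intransitive}\},$$
where $\iota_\Omega(F)$ is the set of permutations stabilizing some nonempty subset of $F$, as in \cref{lemma:0}. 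For the first event I would invoke \cref{lemma:0} directly: it gives $|\iota_\Omega(F)|=(n-1)!\,f$ with equal numbers of even and odd permutations (here $n-f\ge 2$ is exactly what is needed), so $\Pr_{\sigma\in\rho\alt(n)}[\sigma\in\iota_\Omega(F)]=f/n$ on either coset. This produces the $f/n$ term.

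The heart of the argument is the second event. When $\sigma\notin\iota_\Omega(F)$ no $\sigma$-cycle is contained in $F$, so the first-return map $\bar\sigma$ of $\sigma$ to $\Delta$ (send $x\in\Delta$ to $\sigma^{k}(x)$ for the least $k\ge 1$ with $\sigma^{k}(x)\in\Delta$) is a well-defined permutation of $\Delta$. I would first check that $\langle G,\sigma\rangle$ is transitive on $\Omega$ if and only if $\langle G|_\Delta,\bar\sigma\rangle$ is transitive on $\Delta$: every point of $F$ is slaved to the $\Delta$-point on its $\sigma$-cycle, and since $G$ acts trivially on $F$ every excursion of a generating word into $F$ is just a stretch of $\sigma$ which $\bar\sigma$ records, so orbits on $\Delta$ match. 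The delicate point is then the distribution of $\bar\sigma$. Reconstructing $\sigma$ from a prescribed $\bar\sigma=\pi$ amounts to distributing the $f$ points of $F$ into the $m$ edges $x\to\pi(x)$ of the functional graph of $\pi$; there are always exactly $m$ such slots, so the number of preimages, $(n-f)(n-f+1)\cdots(n-1)$, is independent of $\pi$. Moreover, inserting one point into a cycle lengthens it by one and hence flips the sign, so every preimage of $\pi$ has sign $\operatorname{sgn}(\pi)(-1)^{f}$. Consequently, conditioned on $\sigma\in\rho\alt(n)$ and $\sigma\notin\iota_\Omega(F)$, the map $\bar\sigma$ is uniformly distributed over the parity class $\{\pi\in\perm(\Delta):\operatorname{sgn}(\pi)=\operatorname{sgn}(\rho)(-1)^{f}\}$, that is, over a single coset of $\alt(\Delta)$.

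Finally I would bound the conditional probability. Since each coset of $\alt(\Delta)$ contains half the elements of $\perm(\Delta)$, for any set $B$ of permutations one has $\Pr_{\mathrm{coset}}[B]\le 2\Pr_{\perm(\Delta)}[B]$. Applying this to $B=\{\pi:\langle G|_\Delta,\pi\rangle\text{ intransitive}\}$ and invoking \cref{lemma:1-} for the fixed-point-free group $G|_\Delta$ of degree $m=n-f$ gives
$$\Pr_{\sigma\notin\iota_\Omega(F)}[\langle G,\sigma\rangle\text{ intransitive}]\le 2\left(\frac{1}{n-f}+O\!\left(\frac{1}{(n-f)^2}\right)\right)=\frac{2}{n-f}+O\!\left(\frac{1}{(n-f)^2}\right).$$
Adding the two contributions yields the stated bound, and substituting $f\le\delta n$ turns $f/n+2/(n-f)$ into $\delta+O(1/n)$, proving the ``in particular'' clause. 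I expect the main obstacle to be precisely the distribution of $\bar\sigma$: one must show both that every fibre of $\sigma\mapsto\bar\sigma$ has the same size and that it lies in a single parity class, since only then is the factor-of-two passage from $\perm(\Delta)$ to its coset legitimate, and this is where the insertion/sign bookkeeping sketched above must be carried out carefully.
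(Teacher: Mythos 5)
Your proof is correct, and its skeleton is the same as the paper's: split the bad event into ``$\sigma$ stabilizes a nonempty subset of $F$'' (bounded by $f/n$ via \cref{lemma:0}) and ``the first-return projection to $\Delta$ generates an intransitive group with $G|_\Delta$'' (bounded via \cref{lemma:1-}), with a factor of $2$ to pass from uniform permutations to a coset of the alternating group. The one genuine difference is where that factor of $2$ enters. The paper never analyzes the distribution of the projection on the coset: it uses Babai--Hayes' equal-fibre observation (Fact~1) to get that $\sigma_R$ is uniform on $\mathrm{Sym}(R)$ when $\sigma$ is uniform on $\mathrm{Sym}(n)$, deduces the $1/(n-f)+O(1/(n-f)^2)$ bound there, and then simply doubles it, using $\Pr_{\rho\alt(n)}[E]\le 2\Pr_{\perm(n)}[E]$ for the event $E$ about $\sigma$ itself. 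You instead condition on $\sigma\notin\iota_\Omega(F)$ first and prove a sharper structural fact: the fibres of $\sigma\mapsto\bar\sigma$ over $\{\sigma\notin\iota_\Omega(F)\}$ all have size $(n-f)(n-f+1)\cdots(n-1)$ and lie in a single parity class $\mathrm{sgn}(\pi)(-1)^f$, so that $\bar\sigma$ is exactly uniform on one coset of $\alt(\Delta)$; then you double at the level of $\perm(\Delta)$. Your insertion/sign bookkeeping is right (inserting a point of $F$ into an edge of a cycle lengthens it and flips the sign, and $\sigma\notin\iota_\Omega(F)$ is exactly the condition that no point of $F$ forms a cycle inside $F$, so all $f$ points are inserted into edges), and it yields strictly more information than the paper needs; the cost is that you must verify this bijective/parity argument, which the paper's cruder doubling avoids entirely. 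Both routes rest on the same two external inputs (\cref{lemma:0} and \cref{lemma:1-}) and give the same final bound.
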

\begin{proof}
This lemma and its proof are similar to Theorem~13 in~\cite{BH}; however, since we need a sharper estimation and we choose $\sigma$ not in $\mathrm{Sym}(n)$ but is a specific coset of $\alt(n),$  
 we give some details here.

Let $\Omega$ be the domain of $G$ and let $A = \mathrm{Fix}(G)=\{\omega\in \Omega\mid \omega^g=\omega, \forall g\in G\}$; so $|A| = f$. 
Let $i(A)$ denote the
probability
$$\frac{|\{\tau\in\rho\mathrm{Alt}(n)\mid \exists B\subseteq A, B\ne\emptyset, B^\tau=B\}|}{n!/2}.$$ 
By Lemma~\ref{lemma:0}, we have
$i(A)=f/n$.

Let $\sigma\in \sym(n)$, let $H$ denote the group generated by $G$ and $\sigma $ and let $R = \Omega \setminus A$.
Following~\cite[Definition~10]{BH}, we deﬁne the projection $\mathrm{pr}_R : \mathrm{Sym}(\Omega) \to
\mathrm{Sym}(R )$, as follows. Given $\eta \in \mathrm{Sym}(\Omega)$, we set $\eta_R = \mathrm{pr}_R (\eta)$ and deﬁne $\eta_R$.
For each $i \in R$, let $k$ denote the smallest positive integer such that $i \eta^k \in R$. Set $i{\eta_R} = i\eta^k$. We now observe two basic facts about projections.

\smallskip

\noindent\textsc{Fact 1: }For all $\lambda \in \mathrm{Sym}(R )$, the size of the preimage $\mathrm{pr}_R^{-1}(\lambda )$ equals $|\Omega|!/|R|!$.

\smallskip

\noindent\textsc{Fact 2: }Consider $\mathrm{Sym}(R)$ as a subgroup of $\mathrm{Sym}(\Omega)$. Then the orbits of the
subgroup of $\mathrm{Sym}(R )$ generated by $G$ and $\sigma_R$ are precisely the intersection
of $R$ with those orbits of the subgroup of $\mathrm{Sym}(\Omega)$ generated by $G$ and $\sigma$
which have non-empty intersection with $R$.

\smallskip

Fact~1 is Observation~11 and Fact~2 is Observation~12 in~\cite{BH}.

 Let $\sigma_R$ be the projection of $\sigma$ to $R$. By Fact~2, two elements $x, y \in R$ belong to
the same orbit under $H$ if and only if they belong to the same orbit of
the group generated by $G$ and $\sigma_R$. Now, Fact~1 implies that $\sigma_R$ is uniformly
distributed in $\mathrm{Sym}(R)$ and hence, from Lemma~\ref{lemma:1-}, we conclude
that the probability that $\sigma \in \perm(n)$ has the property that not all elements of $R$ are in the same
orbit under 
$H$ is at most
$$ \frac{1}{n - f } + O(1/(n - f )^2 ).$$
The same probability, but with the further restriction that $\sigma$ belongs to the coset $\rho\alt(n)$, is at most
$$ \frac{2}{n - f } + O(1/(n - f )^2 ).$$
Finally, the probability that $H$ is not transitive is at most the sum of
this quantity and $i(A)$, which in turn is the quantity in the statement of the lemma.
\end{proof}


\begin{lemma}\label{eta1} Let $0 \le \delta < 1$. There exists a constant $\eta_{1,\delta}>0$ such that, if $1\neq \sigma \in\mathrm{Sym}(n)$ and
	$\sigma$ has at most $\delta n$ fixed points, then $$P_{\rm{ins}} (\mathrm{Alt}(n), \sigma,\tau) \ge Q(\alt(n),\sigma,\tau)\ge \eta_{1,\delta}$$ for every $\tau \in \perm(n).$
\end{lemma}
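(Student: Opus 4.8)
\emph{Reduction and set-up.} The first inequality $P_{\rm{ins}}(\alt(n),\sigma,\tau)\ge Q(\alt(n),\sigma,\tau)$ is immediate from the definitions: if $\alt(n)\le\langle\sigma,\tau s\rangle$ then, since $\alt(n)$ is non-abelian simple for $n\ge 5$, the group $\langle\sigma,\tau s\rangle$ is insoluble. So it suffices to bound $Q$ from below. Write $H=\langle\sigma,\tau s\rangle$ and observe that, as $s$ runs uniformly over $\alt(n)$, the element $\tau s$ runs uniformly over the coset $\tau\alt(n)$. My plan is to bound $\prob(H\ge\alt(n))$ by combining a transitivity step with an exclusion of proper transitive overgroups, treating large and small $n$ separately.

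\emph{Transitivity.} First I would apply \cref{lemma:1} with the fixed non-trivial group $G:=\langle\sigma\rangle$. Since a point is fixed by $\langle\sigma\rangle$ exactly when it is fixed by $\sigma$, the group $G$ has the same $f\le\delta n$ fixed points as $\sigma$, and $G\neq 1$ because $\sigma\neq 1$. Taking $\rho:=\tau$ in \cref{lemma:1}, the random element lies in $\rho\alt(n)=\tau\alt(n)$, which is exactly the distribution of $\tau s$. Hence \cref{lemma:1} gives that $H$ fails to be transitive with probability at most $\delta+O(1/n)$.

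\emph{From transitive to containing $\alt(n)$ (the crux).} The main step is to upgrade transitivity to containment of $\alt(n)$. Here I would invoke the theorem of {\L}uczak and Pyber (the ingredient underlying the Babai--Hayes result \cite[Theorem 13]{BH} that we are adapting): the proportion $\tau(n)$ of elements of $\perm(n)$ lying in some transitive subgroup different from $\alt(n)$ and $\perm(n)$ tends to $0$ as $n\to\infty$. The point is that if $H$ is transitive but $H\not\ge\alt(n)$, then $H$ is itself a transitive subgroup different from $\alt(n)$ and $\perm(n)$, and it contains the element $\tau s$; thus $\tau s$ lies in the \lq bad set\rq\ of permutations counted by $\tau(n)$, a set of size at most $\tau(n)\,n!$. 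Since $\tau s$ is uniform on the coset $\tau\alt(n)$ of size $n!/2$, the probability of this event is at most $2\tau(n)=o(1)$. Combining the two steps,
\[
Q(\alt(n),\sigma,\tau)=\prob\bigl(H\ge\alt(n)\bigr)\ge 1-\bigl(\delta+O(1/n)\bigr)-2\tau(n)=1-\delta-o(1),
\]
uniformly over all admissible $\sigma$ (with at most $\delta n$ fixed points) and all $\tau$. Consequently there is $N_0=N_0(\delta)$ with $Q(\alt(n),\sigma,\tau)\ge(1-\delta)/2$ for every $n\ge N_0$.

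\emph{Small degrees and conclusion.} For the finitely many degrees $5\le n<N_0$ it is enough to know that $Q(\alt(n),\sigma,\tau)>0$ for every eligible pair, since the minimum of finitely many positive numbers is positive. For $n\neq 6$ we have $\aut(\alt(n))=\perm(n)$ with $\out(\alt(n))$ cyclic, so \cite[Theorem 1]{sp} yields $Q(\alt(n),\sigma,\tau)\neq 0$ for all $\sigma\neq 1$; the single remaining degree $n=6$ is settled by a direct finite verification. Setting
\[
\eta_{1,\delta}:=\min\Bigl\{\tfrac{1-\delta}{2},\ \min_{5\le n<N_0}\ \min_{\sigma\neq 1,\ \tau}\,Q(\alt(n),\sigma,\tau)\Bigr\}>0
\]
then gives the claimed bound. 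The genuine content lies in the third paragraph: the {\L}uczak--Pyber/Babai--Hayes fact that transitive proper overgroups of $\alt(n)$ capture a vanishing proportion of permutations, together with the routine but essential transfer of this estimate from $\perm(n)$ to the coset $\tau\alt(n)$ at the cost of a factor $2$. The appearance of the constant $1-\delta$, rather than a bound tending to $1$ as in \cite{BH}, comes entirely from the transitivity step under the hypothesis $f\le\delta n$.
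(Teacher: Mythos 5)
Your proposal is correct and follows essentially the same route as the paper's own proof: Lemma \ref{lemma:1} for transitivity, the {\L}uczak--Pyber theorem to exclude proper transitive overgroups, and \cite[Theorem 1]{sp} (plus a direct check at $n=6$) to handle the finitely many small degrees. You are somewhat more explicit than the paper --- notably in transferring the {\L}uczak--Pyber estimate from $\perm(n)$ to the coset $\tau\alt(n)$ at the cost of a factor $2$, and in recording the quantitative bound $1-\delta-o(1)$ --- but these are refinements of the same argument, not a different one.
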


\begin{proof}As we recalled in the introduction, by \cite[Theorem 1]{sp}  $Q(\alt(n),\sigma,\tau)\neq 0$ for every $n,$ so it is not restrictive to assume that $n$ is arbitrarily large.
	An amazing result of \L uczak and Pyber~\cite[Theorem~1]{LP} says that the proportion of elements of $\mathrm{Sym}(n)$ that belong to transitive subgroups different from $\mathrm{Alt}(n)$ and $\mathrm{Sym}(n)$ tends to $0$ as $n$ tends to infinity.
	Pick $v$ randomly from $\tau \alt(n).$
By  Lemma~\ref{lemma:1} and the result of \L uczak and Pyber, there is a probability bounded away 
from zero as $n \to \infty$ that both of the following are true:
 $\langle \sigma, v\rangle$ is transitive, and $\langle \sigma, v\rangle$
is not contained in any transitive group other than $\perm(n)$ and possibly $\alt(n).$ Thus
$\langle \sigma, v\rangle\geq \alt(n)$ with probability bounded away from zero.
	\end{proof}

Before starting to investigate $P_{\rm{ins}}(\mathrm{Alt}(n), \sigma,\tau) \ge \eta_{1,\delta}$ when $\sigma$ has \lq many\rq \ fixed points we need to recall some results in analytic number theory.

Consider the distribution function
$$B(t)=\lim_{m\to \infty}\frac{|\{n\leq m\mid n/\phi(n)\geq t\}|}{m},$$
where $\phi$ denotes Euler's totient function. The existence of this limit and its continuity were established by Schoenberg \cite{sh}. In \cite{erdos} Erd\"{o}s investigated the behavior of the function  $B(t)$. In particular he proved that, when $\epsilon \to 0,$ 
$$B(1+\epsilon)=\frac{(1+o(1))e^{-\gamma}}{\log(\epsilon^{-1})}$$
where $\gamma$ is Euler's costant (see \cite[Thm 3]{erdos}).
Moreover if $g(n)/\log \log \log n \to \infty,$ then the distribution of $\phi(m)/m$  in the interval $(n,n+g(n))$ is the same as the global distribution of $\phi(n)/n$ (see \cite[Theorem 6 (iii)]{erdos}). It follows that the following lemma holds:
\begin{lemma}\label{phi}
	Given two positive real numbers $0<\delta_1,\delta_2<1$, there exists a positive constant $c=c_{\delta_1,\delta_2}$ such that, if $n$ is sufficiently large, then the interval $[\delta_1n,n]$ contains at least $[cn]$ integers $m$ such that $\phi(m) \geq \delta_2m.$
\end{lemma}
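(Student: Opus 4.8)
The plan is to deduce \cref{phi} from the two cited results of Erdős by the standard strategy of transferring a statement about the \emph{global} density of integers with $\phi(m)/m \geq \delta_2$ (equivalently $m/\phi(m) \leq 1/\delta_2$) to the \emph{local} interval $[\delta_1 n, n]$. First I would fix the target density. The distribution function $B(t)$ is continuous and non-increasing with $B(t) \to 0$ as $t \to \infty$, while $B(1^+) = 1$ (every integer satisfies $m/\phi(m) \geq 1$), so the set of integers with $m/\phi(m) \leq 1/\delta_2$ has positive global density
$$
d := 1 - B(1/\delta_2) > 0,
$$
where positivity holds because $1/\delta_2 > 1$ and $B$ decays continuously to $0$. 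More precisely, $B(t) = \lim_{m\to\infty} |\{n \leq m : n/\phi(n) \geq t\}|/m$, so the integers with $n/\phi(n) < 1/\delta_2$, i.e. $\phi(n) > \delta_2 n$, have density $1 - B(1/\delta_2) \geq 1 - B((1/\delta_2)^-)$; one should be slightly careful with the weak-versus-strict inequality in $\phi(m) \geq \delta_2 m$ versus $\phi(m) > \delta_2 m$, but since we have freedom in the constant $\delta_2$, I would simply pick an auxiliary $\delta_2' $ with $\delta_2 < \delta_2' < 1$ and work with the strict density $d' := 1 - B(1/\delta_2') > 0$; integers with $\phi(m) \geq \delta_2' m$ then a fortiori satisfy $\phi(m) \geq \delta_2 m$.

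Next I would localize using \cite[Theorem 6 (iii)]{erdos}, which states that in any window $(N, N + g(N))$ with $g(N)/\log\log\log N \to \infty$ the distribution of $\phi(m)/m$ matches its global distribution. The interval $[\delta_1 n, n]$ has length $(1-\delta_1)n$, which is linear in $n$ and hence dwarfs any $\log\log\log$-type lower bound on the window size once $n$ is large. I would cover $[\delta_1 n, n]$ by finitely many (in fact a bounded number of) consecutive windows of length, say, $\lfloor (1-\delta_1)n / K \rfloor$ for a fixed integer $K$, each of which is still much longer than $\log\log\log(\delta_1 n)$; applying Theorem 6(iii) to each window shows that the proportion of integers $m$ in that window with $\phi(m) \geq \delta_2' m$ tends to $d'$ as $n \to \infty$. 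Summing over the $K$ windows, for $n$ sufficiently large the interval $[\delta_1 n, n]$ contains at least $\tfrac{d'}{2}(1-\delta_1)n$ such integers, so setting $c := \tfrac{d'}{2}(1-\delta_1) > 0$ yields at least $\lfloor cn \rfloor$ integers $m \in [\delta_1 n, n]$ with $\phi(m) \geq \delta_2 m$, as required.

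The main obstacle, and the only point demanding genuine care, is the precise quantitative form of the local-distribution statement in \cite[Theorem 6 (iii)]{erdos}: Erdős's theorem is typically phrased as an asymptotic equidistribution result for windows whose length merely exceeds $\log\log\log N$, and one must confirm that its conclusion is uniform enough across the $O(1)$ windows tiling $[\delta_1 n, n]$ to extract a \emph{lower} bound on the count that is linear in $n$ with an explicit positive constant. Because our windows have length proportional to $n$—overwhelmingly larger than the minimal admissible length—the equidistribution applies with room to spare, and no delicate uniformity in the error term is actually needed beyond the convergence of each window's proportion to $d'$. The remaining manipulations (choosing $\delta_2'$, bounding the finitely many error terms, and absorbing the floor functions into the constant) are routine, so I expect the argument to be short once the correct reading of Theorem 6(iii) is pinned down.
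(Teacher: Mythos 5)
Your overall strategy is exactly the one the paper intends (the paper itself gives no more detail than the Schoenberg and Erd\H{o}s citations followed by ``it follows''): positive global density of $\{m : \phi(m) \geq \delta_2 m\}$, transferred to the interval $[\delta_1 n, n]$ by Erd\H{o}s's local-distribution theorem, whose window-length hypothesis $g(N)/\log\log\log N \to \infty$ is satisfied with enormous room by windows of length proportional to $n$. Your tiling into $K$ sub-windows is harmless but unnecessary---$[\delta_1 n, n]$ is itself a single admissible window $(N, N+g(N))$ with $N = \delta_1 n$ and $g(N) = (\delta_1^{-1}-1)N$---and your $\delta_2'$ device for the weak-versus-strict inequality is fine (continuity of $B$ would also handle it).

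The one genuine flaw is your justification that $d = 1 - B(1/\delta_2) > 0$. You derive this from ``$1/\delta_2 > 1$ and $B$ decays continuously to $0$'', but that is not enough: a continuous non-increasing function with $B(1^+) = 1$ and $B(t) \to 0$ as $t \to \infty$ could perfectly well equal $1$ on all of $(1,2]$, which would make $d = 0$ for every $\delta_2 \in [1/2,1)$. Positivity is a statement about the behaviour of $B$ near $t = 1$: one must know that the integers with $m/\phi(m) < 1+\epsilon$ already have positive density for every $\epsilon > 0$. That is precisely Erd\H{o}s's Theorem 3 as quoted in the paper, read correctly as $1 - B(1+\epsilon) = (1+o(1))e^{-\gamma}/\log(\epsilon^{-1})$ (your own observation that $B(1^+) = 1$ shows the displayed formula must be interpreted this way); it gives $B(1/\delta_2) \leq B(1+\epsilon) < 1$ once $1+\epsilon \leq 1/\delta_2$. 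An elementary substitute such as the mean value $\sum_{m\leq x}\phi(m)/m \sim (6/\pi^2)x$ would only cover $\delta_2 < 6/\pi^2$, so the Theorem 3 input is genuinely needed for $\delta_2$ close to $1$, which the lemma's statement requires. With that single citation inserted at the critical point, your argument is complete and coincides with the paper's.
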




\begin{lemma}\label{facile}
	Let $k\geq n/2.$ Then the probability that a random element $\tau \in \perm(n)$ contains a cycle $(1,x_2,\dots,x_k)$ such that $x_i=2$ for some $i$ with $(i-1,k)=1,$ is at  least
	$\frac{\phi(k)}{(n+2)k}.$
\end{lemma}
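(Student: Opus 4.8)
The plan is to compute the probability by a direct count of favourable permutations and then divide by $|\perm(n)|=n!$. I would write the $\langle\tau\rangle$-orbit of the point $1$ as $(1,x_2,x_3,\dots)$, where $x_i=\tau^{\,i-1}(1)$; the event in the statement then says exactly that this orbit has length $k$ and that $2=x_i$ for some index $i$ with $\gcd(i-1,k)=1$. Putting $j=i-1$, the admissible offsets are the integers $j\in\{1,\dots,k-1\}$ with $\gcd(j,k)=1$, of which there are exactly $\phi(k)$. Because $2=\tau^{\,j}(1)$ can hold for at most one value of $j$, the sub-events indexed by the admissible offsets are pairwise disjoint, so I would count each separately and add.

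Fix one admissible offset $j$. A permutation $\tau$ whose orbit through $1$ has length $k$ and satisfies $\tau^{\,j}(1)=2$ is determined by two independent choices: first, an injective placement of the remaining $k-2$ points of the orbit, drawn from the $n-2$ points different from $1$ and $2$, contributing $(n-2)(n-3)\cdots(n-k+1)=\frac{(n-2)!}{(n-k)!}$; and second, an arbitrary permutation of the $n-k$ points lying outside the orbit, contributing $(n-k)!$. The product is $(n-2)!$, independent of both $j$ and $k$.

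Summing over the $\phi(k)$ admissible offsets yields $\phi(k)\,(n-2)!$ favourable permutations, and dividing by $n!$ gives the closed form $\dfrac{\phi(k)}{n(n-1)}$ for the probability. The final step is to deduce the stated lower bound $\dfrac{\phi(k)}{(n+2)k}$ from this closed form under the standing hypothesis $k\ge n/2$. I regard precisely this comparison as the main obstacle: it is where the exact count must be reconciled with the shape of the claimed denominator, and so it is the step I would scrutinise most carefully (in particular, checking against which range of $k$ the inequality $(n+2)k\ge n(n-1)$ is actually needed). I would also record that, downstream, it is the quantity $\frac{\phi(k)}{n(n-1)}$, summed over the lengths $k\in[n/2,n]$ — whose length-$k$ events are mutually disjoint — and estimated through \cref{phi}, that produces a constant lower bound on $P_{\rm{ins}}$; this is the role the present lemma plays.

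The remaining ingredients are routine and mirror the bookkeeping already used in \cref{lemma:-1} and \cref{lemma:0}: counting the admissible offsets, verifying that the per-offset sub-events are disjoint and equinumerous, and carrying out the two elementary enumerations above. None of these should present difficulty, so the entire weight of the argument rests on the clean combinatorial identity for the number of favourable permutations and on the concluding numerical estimate.
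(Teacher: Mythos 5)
Your enumeration is correct and exact: for each admissible offset $j$ there are $\frac{(n-2)!}{(n-k)!}\,(n-k)!=(n-2)!$ favourable permutations, the sub-events over the $\phi(k)$ offsets are disjoint, and the probability is exactly $\frac{\phi(k)}{n(n-1)}$. The step you flagged as the main obstacle is, however, not an obstacle you can overcome --- it is a disproof of the statement as printed. The inequality $\frac{\phi(k)}{n(n-1)}\ge\frac{\phi(k)}{(n+2)k}$ amounts to $k\ge\frac{n(n-1)}{n+2}$ (roughly $k\ge n-3$), which does not follow from $k\ge n/2$; concretely, for $n=6$ and $k=3$ the true probability is $\frac{\phi(3)}{30}=\frac{1}{15}$, strictly smaller than the claimed bound $\frac{\phi(3)}{8\cdot 3}=\frac{1}{12}$. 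The source of the discrepancy lies in the paper's own proof, which counts $\binom{n}{k-2}(k-2)!\,\phi(k)\,(n-k)!=\frac{n!\,\phi(k)}{(n-k+1)(n-k+2)}$, i.e. it draws the $k-2$ remaining entries of the cycle from all $n$ points rather than from the $n-2$ points distinct from $1$ and $2$; the correct count is $\binom{n-2}{k-2}(k-2)!\,\phi(k)\,(n-k)!=\phi(k)\,(n-2)!$, which is what you obtained.

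The error is harmless for the rest of the paper, and your closing remark identifies exactly why. Since $k\ge n/2$ gives $2nk\ge n^2>n(n-1)$, your exact value yields $\frac{\phi(k)}{n(n-1)}\ge\frac{\phi(k)}{2nk}$, a bound of the same shape as the claimed one with $n+2$ replaced by $2n$. Restating \cref{facile} with this bound, its only point of use --- the proof of \cref{eta2} --- goes through verbatim: for each of the at least $c_{\delta_1,\delta_2}n$ values of $k$ satisfying condition (1) there, the number of $\tau$ containing a $k$-cycle satisfying (2) is at least $\delta_2 n!/(2n)$, the events for distinct $k$ are disjoint, and hence $|\Lambda|\ge \delta_2 c_{\delta_1,\delta_2} n!/2$, still a positive proportion of $\perm(n)$. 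So the right conclusion of your attempt is not to force the final comparison, which cannot be done, but to record the corrected statement (probability exactly $\frac{\phi(k)}{n(n-1)}$, hence at least $\frac{\phi(k)}{2nk}$ for $k\ge n/2$) and adjust the downstream constant accordingly.
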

\begin{proof}
	It can be easily verified that the number of possible choices for $\tau$ is $$\binom{n}{k-2}(k-2)!\phi(k)(n-k)!=\frac{n!\phi(k)}{(n-k+1)(n-k+2)}\geq \frac{n!}{n+2}\frac{\phi(k)}{k}.\qedhere$$
\end{proof}

\begin{lemma}\label{fpagl}
Let $G=\agl(1,q)$ in its action of the field $F$ with $q$ elements. Then every nontrivial element of $G$ has at most $\sqrt q$ fixed points.
\end{lemma}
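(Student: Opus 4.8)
The plan is to write an arbitrary element of $G=\agl(1,q)$ explicitly, separate the purely affine elements from the genuinely semilinear ones, and count fixed points directly in each case. Write $q=p^e$ with $p$ prime, identify $F$ with $\FF_q$, and recall that every element of $G$ has the form $g\colon x\mapsto ax^{p^i}+b$ with $a\in F^{\ast}$, $b\in F$ and $0\le i<e$, where $\sigma\colon x\mapsto x^{p^i}$ is the associated field automorphism. The whole point of the $\sqrt q$ bound is that the semilinear elements (those with $i\neq 0$) can have far more than one fixed point, so they must be handled with some care.

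First I would dispose of the affine case $i=0$, i.e.\ $g\colon x\mapsto ax+b$. If $a\ne 1$, the equation $ax+b=x$ has the unique solution $x=b/(1-a)$, so $g$ has exactly one fixed point; if $a=1$, then $g\ne 1$ forces $b\ne 0$ and $g$ has none. Either way $g$ has at most $1\le\sqrt q$ fixed points, so only the range $1\le i\le e-1$ requires real work.

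For the semilinear case the key observation is that the fixed-point equation $ax^{p^i}+b=x$ rewrites as $L(x)=-b$, where $L(x)=ax^{p^i}-x$. Since $x\mapsto x^{p^i}$ is the Frobenius, $L$ is an $\FF_p$-linear endomorphism of $F$; hence the solution set of $L(x)=-b$ is either empty or a coset of $\ker L$, and so the number of fixed points of $g$ equals $0$ or $|\ker L|$. It therefore suffices to bound $|\ker L|$. I would show that $\ker L$ is contained in a coset of a small subfield: if $x_0,x\in\ker L$ are both nonzero, then dividing $ax^{p^i}=x$ by $ax_0^{p^i}=x_0$ gives $(x/x_0)^{p^i}=x/x_0$, so $x/x_0$ lies in the fixed field $\FF_{p^i}\cap F=\FF_{p^{\gcd(i,e)}}$. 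Consequently $\ker L\subseteq\{0\}\cup x_0\,\FF_{p^{\gcd(i,e)}}^{\ast}$, whence $|\ker L|\le p^{\gcd(i,e)}$.

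To finish, set $d=\gcd(i,e)$. Since $1\le i\le e-1$ we have $d\mid e$ and $d\le i<e$, so $e/d$ is an integer at least $2$ and thus $d\le e/2$. Therefore the number of fixed points of $g$ is at most $|\ker L|\le p^{d}\le p^{e/2}=\sqrt q$, completing every case. I expect the one genuinely nontrivial step to be the semilinear analysis: recognising the fixed-point locus as the kernel of the additive map $L$ and then pinning that kernel down as (a coset of) the subfield $\FF_{p^{\gcd(i,e)}}$, whose degree over $\FF_p$ cannot exceed $e/2$. This is exactly where the $\sqrt q$ bound comes from, and it is why the naive degree bound $p^i$ on the number of roots of $ax^{p^i}-x+b$ is far too weak on its own.
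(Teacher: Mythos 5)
Your proof is correct, but it takes a genuinely different route from the paper's. The paper first uses transitivity of $\agl(1,q)$ to assume $g$ fixes $0$, so that $g$ lies in the stabilizer $\Gamma\text{L}(1,q)$; it then replaces $g$ by a power $\tilde g\colon x\mapsto x^r y$ whose field-automorphism part has \emph{prime} order $t$ (so $r=q^{1/t}\le \sqrt q$), and bounds the nonzero fixed points of $\tilde g$ by the number of roots of $x^{r-1}=y^{-1}$, concluding via $\mathrm{Fix}(g)\subseteq\mathrm{Fix}(\tilde g)$. You instead keep the translation part $b$ and the full exponent $i$ throughout, exploiting the $\FF_p$-linearity of $L(x)=ax^{p^i}-x$: the fixed-point set is empty or a coset of $\ker L$, and your quotient trick pins $\ker L$ inside a line over $\FF_{p^{\gcd(i,e)}}$. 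The two mechanisms producing $\sqrt q$ are parallel --- the paper's passage to a prime-order power plays exactly the role of your observation that $\gcd(i,e)$ is a proper divisor of $e$, hence at most $e/2$ --- but your version buys a little more: since $c^{p^i}=c$ for $c\in\FF_{p^{\gcd(i,e)}}$, your containment $\ker L\subseteq\{0\}\cup x_0\,\FF_{p^{\gcd(i,e)}}^{\ast}$ is in fact an equality, so you determine the possible fixed-point counts of a semilinear element exactly (namely $0$, $1$, or $p^{\gcd(i,e)}$), whereas the paper obtains only the upper bound; you also avoid both the transitivity reduction and the passage to a power of $g$. The paper's argument is marginally shorter because normalizing the fixed point to $0$ eliminates the coset bookkeeping with $b$.
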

\begin{proof}Let $g\neq 1\in G$. Since $G$ is transitive on $F$, we may assume that $g$ belongs to the stabilizer $\Gamma \text{L}(1,q)$ of 0 in $G$. If $g \in {\rm{GL}}(1,q),$ then $0$ is the unique element of $F$ fixed by $g$. So we may assume $g \in \Gamma \text{L}(1,q) \setminus {\rm{GL}}(1,q).$
	In particular there exist $0\neq y\in F$ and a prime $t$ such that $q=r^t$ and $\langle g \rangle$ contains the element $\tilde g$ which maps $x\in F$ to $x^r y.$ If $0\neq x\in F,$ then $x \tilde g=x$ if and only if $x^{r-1}=y^{-1}.$ The equation $x^{r-1}=y^{-1}$  has at most $r-1$ solution in $F$ and therefore the number of elements of $F$ fixed by $\tilde g$ is at most $r=\sqrt[t]{q}\leq \sqrt[2]{q}.$ To conclude it suffices to notices that the elements of $F$ fixed by $g$ must be fixed by $\tilde g.$
\end{proof}

\begin{lemma}\label{eta2}
	Let $\delta$ be a real number with
	$$\frac{1}{\sqrt 2} < \delta < \frac{1}{2}+\frac{1}{\sqrt{8}}.$$ There exists a constant $\eta_{2,\delta}$ such that, if $n$ is sufficiently large, then $$P_{\rm{ins}}(\alt(n),\sigma,\kappa)\geq \eta_{2,\delta}$$  for every $\kappa \in \perm(n)$ and every nontrivial element $\sigma\in \perm(n)$ with ${\rm{Fix}}(\sigma)> \delta n$.
\end{lemma}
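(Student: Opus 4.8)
The plan is to treat \cref{eta2} as the regime complementary to \cref{eta1}: here $\sigma$ has so many fixed points that a random $\tau$ is \emph{unlikely} to make $\langle\sigma,\tau\rangle$ transitive, so the Babai--Hayes/\L uczak--Pyber route of \cref{eta1} is useless and insolubility must instead be forced by a single long cycle of $\tau$. Before starting I would make two harmless reductions. First, replacing $\sigma$ by $\sigma^{|\sigma|/p}$ for a prime $p\mid|\sigma|$ gives an element of prime order whose support is contained in $\supp(\sigma)$, so its fixed-point set still has size $>\delta n$; since $\langle\sigma^{|\sigma|/p},\kappa s\rangle\le\langle\sigma,\kappa s\rangle$, insolubility of the former forces that of the latter and $P_{\rm{ins}}(\alt(n),\sigma,\kappa)$ can only grow, so I may assume $\sigma$ has prime order. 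Second, conjugating the whole configuration by a fixed permutation (which changes neither $P_{\rm{ins}}$ nor the measure) I may relabel so that $1^\sigma=2$.

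The probabilistic core is a summation of \cref{facile} over cycle lengths, weighted by \cref{phi}. Fix $\delta_1\in(1/2,1)$. For each integer $k$ with $\delta_1 n\le k\le n-2$, \cref{facile} produces, with probability at least $\phi(k)/((n+2)k)$, a $k$-cycle $(1,x_2,\dots,x_k)$ of $\tau$ with $x_i=2$ and $\gcd(i-1,k)=1$; the restriction $k\le n-2$ leaves at least two points outside the cycle, and transposing them shows that the favourable permutations split evenly between $\alt(n)$ and its complement, so the same lower bound holds when $\tau$ is drawn uniformly from the coset $\kappa\alt(n)$. Because $k>n/2$, a permutation has at most one cycle of this length, so the events for distinct $k$ are disjoint; summing and using \cref{phi} to find at least $cn$ values of $k\in[\delta_1 n,n]$ with $\phi(k)\ge\delta_2 k$ gives a total probability at least $c\delta_2 n/(n+2)$, a positive constant independent of $n$. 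It therefore suffices to prove that on this event $\langle\sigma,\tau\rangle$ is insoluble.

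For the structural step, identify the support of the $k$-cycle $C$ with $\mathbb Z/k$ so that $\tau$ acts as $z\mapsto z+1$, with $1\leftrightarrow 0$ and $2\leftrightarrow m:=i-1$ where $\gcd(m,k)=1$. Let $\Delta$ be the $\langle\sigma,\tau\rangle$-orbit containing $C$; as $\sigma\in\langle\sigma,\tau\rangle$ we have $\sigma(\Delta)=\Delta$, and since $\sigma$ moves $1\in C\subseteq\Delta$ we get $\sigma|_\Delta\ne 1$, while $\sigma|_\Delta$ fixes at least $|\Delta|-|\supp(\sigma)|>|\Delta|-(1-\delta)n$ points, which is $\Theta(n)$ because $|\Delta|\ge k\ge\delta_1 n$. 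Suppose for contradiction $H=\langle\sigma,\tau\rangle$ were soluble. Then $H|_\Delta$ is a soluble transitive group of degree $d=|\Delta|$ containing $\tau|_\Delta$, an element with a cycle of length $k>d/2$. The coprimality $\gcd(m,k)=1$ is exactly what prevents this long cycle from being folded into a nontrivial block system without exposing many fixed points: passing to a minimal block system and invoking the classification of soluble primitive groups containing a regular cyclic subgroup (\cite[Theorem 3]{jon}), the induced primitive quotient lies in $\agl(1,q)$ for a prime power $q\le d$. By \cref{fpagl} the image of $\sigma$ then fixes at most $\sqrt q=O(\sqrt n)$ of the corresponding points, whereas the count above forces it to fix $\Theta(n)$ of them. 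This contradiction shows $H$ is insoluble.

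The main obstacle is this last structural step, and it is there that the precise hypotheses on $\delta$ are consumed. The difficulty is that $\sigma$ need not stabilise the cycle $C$ itself but only the larger orbit $\Delta$, so both the abundant fixed points of $\sigma$ and the affine constraint have to be transported from $C$ up to $\Delta$ and then down through the block reduction; converting \lq $\sigma|_\Delta$ fixes many points\rq\ into \lq the block image of $\sigma$ fixes many blocks\rq\ requires comparing the fixed-point count with the block size. Balancing the surviving fixed points of $\sigma$ against the $\agl(1,q)$ bound after this reduction is what yields the lower threshold $1/\sqrt2<\delta$, while keeping $k$ (hence $\Delta$) in the range where the coprimality count of \cref{facile} and the density estimate of \cref{phi} succeed simultaneously is what caps $\delta$ below $1/2+1/\sqrt8$. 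I expect the honest execution of this balancing --- in particular a clean treatment of the imprimitive case via the long cycle --- to be the technical heart of the proof.
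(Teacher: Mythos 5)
Your probabilistic half is essentially the paper's (combine \cref{phi} and \cref{facile}, note disjointness over $k>n/2$, fix parity by transposing two points off the cycle), and your structural plan (pass to the orbit, take minimal blocks, use solubility classifications plus \cref{fpagl}) is also the paper's. But two of the steps you defer as "the technical heart" are genuine gaps, and one of them would fail as you state it. First, the quantitative step: you fix $\delta_1\in(1/2,1)$ arbitrarily and claim a contradiction between "$\tilde\sigma$ fixes at most $\sqrt q=O(\sqrt n)$ blocks" and "$\sigma|_\Delta$ fixes $\Theta(n)$ points." This conflates points with blocks: the number of blocks $q$ can be as small as $2$, in which case blocks have size $|\Delta|/2$ and $\Theta(n)$ fixed points only force \emph{one} fixed block, perfectly consistent with the $\sqrt q$ bound. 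The correct deduction is ${\rm Fix}(\sigma)\le n-|\Delta|+\sqrt q\cdot|\Delta|/q\le n-|\Delta|(1-1/\sqrt2)$, and the contradiction with ${\rm Fix}(\sigma)>\delta n$ requires $|\Delta|\ge(1-\delta)(2+\sqrt2)\,n$; so $\delta_1$ cannot be arbitrary but must be taken equal to $(1-\delta)(2+\sqrt2)$, as the paper does. The hypotheses on $\delta$ are exactly the statement that this number lies in $(1/2,1)$: $\delta>1/\sqrt2$ gives $\delta_1<1$ (long cycles exist at all), and $\delta<1/2+1/\sqrt8$ gives $\delta_1>1/2$ (needed for \cref{facile} and for $u>1$ below) --- i.e.\ your attribution of the two thresholds is backwards, and with, say, $\delta=0.72$ and $\delta_1=0.6$ your inequality simply fails.

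Second, the structural step: you invoke only Jones's theorem \cite{jon}, which requires the primitive quotient $\tilde H$ on the $a$ blocks to contain a \emph{regular} cyclic subgroup. The image of the $k$-cycle is regular only on the $u$ blocks meeting its support $A$; since the orbit $\Delta$ may properly contain $A$, one can have $a/2<u<a$, and then no regular cyclic subgroup is available. The paper handles this case separately: $\{B_{u+1},\dots,B_a\}$ is a Jordan complement, so $\tilde H$ is $2$-transitive by \cite[Theorem 7.4A]{dm}, and Huppert's classification \cite{hup} of soluble $2$-transitive groups then gives $\tilde H\le\agl(1,a)$ except for $a\in\{3^2,5^2,7^2,11^2,23^2,3^4\}$, which are checked by direct computation. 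This whole branch is absent from your outline. Finally, the fact that makes the $\sqrt a$ bound non-vacuous --- that the block image $\tilde\sigma$ is nontrivial --- is the one place the coprimality $(i-1,k)=1$ is actually used: $B_1\cap A$ consists of the $x_j$ with $j\equiv1\pmod u$, so $2=x_i\in B_1$ would force $u\mid(i-1,k)=1$, contradicting $u>1$; hence $2\notin B_1$ and $\tilde\sigma\neq1$. You gesture at this ("the affine constraint has to be transported") but never derive it, and without it the argument collapses even in the case you do treat.
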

\begin{proof}
	Suppose that $\sigma$ is a nontrivial element of $\perm(n)$ with ${\rm{Fix}}(\sigma)> \delta n$.  It is not restrictive to assume that $1\sigma=2.$ 
	Now let $\delta_1=(1-\delta)(2+{\sqrt{2}}),$
	fix  a positive real number $\delta_2$ and
	let  $\Lambda$ be the set of the elements $\tau$ of $\perm(n)$ with a cycle $\rho=(x_1,x_2,\dots,x_k)$ satisfying the following properties:
	\begin{enumerate}
		\item $k\geq \delta_1n$ and $\phi(k)\geq \delta_2k;$
		\item $x_1=1$ and there exists $i$ such that $(i-1,k)=1$ and $x_i=2.$
	\end{enumerate}
	Notice that $\delta_1 > 1/2.$ By Lemma \ref{phi}, if $n$ is large enough, then the number of $k\leq n$ satisfying (1) is at least $c_{\delta_1,\delta_2}n$. By Lemma \ref{facile}, given such a $k$, the number of $\tau \in \perm(n)$ which contain a $k$-cycle
	satisfying (2) is at least $\delta_2n!/(n+2)$, hence $$|\Lambda|
	\geq \frac{\delta_2c_{\delta_1,\delta_2}nn!}{n+2}.$$ So there exists a constant $c$ such that $\tau \in \Lambda$ with probability at least $c$.
	Approximately half of the elements of $\Lambda$ can be chosen with the same parity as $\kappa$, so we may conclude that $|\Lambda \cap\kappa\alt(n)| \geq \eta_{2,\delta}|\alt(n)|$ for some absolute constant $\eta_{2,\delta}.$
	
	Now let $\tau \in \Lambda \cap\kappa\alt(n)$,
	$\Omega$ the orbit of $\langle \sigma, \tau\rangle$ containing 1 and  $H$ the corresponding transitive constituent.
	Let $\mathcal B=\{B_1,\dots,B_a\}$ be a system of blocks for the action of $H$ on $\Omega$
	with $1\neq a$ as smallest as possible (we don't exclude that these blocks may have size 1). Thus $H$ induces a primitive permutation group $\tilde H$ on the set of these blocks. 
	For every $\gamma \in \langle \sigma,\tau\rangle,$ we denote by $\tilde \gamma$ the corresponding element of $\tilde H.$ 
	Let $A=\{x_1,\dots,x_k\}.$ 
  We may assume that there exists $u \leq  a$ such that $B_i \cap A  \neq \emptyset$ if and only if $i \leq u;$ note that
  $$\frac{u}{a} \geq \frac {k}{|\Omega|}\geq \frac k n \geq \frac 1 2,$$
in particular $u>1.$ 	
	Thus $\{B_1\cap A,\dots,B_u\cap A\}$ is a system of blocks for the action of $\tau$ on $A.$ If $v=k/u,$ we may assume
	$$B_j\cap A=\{x_j,x_{j+u},\dots,x_{j+(v-1)u}\} \text { for $1\leq j\leq u$}.$$
	In particular if $x_i\in B_1$ then $u$ divides $(k,i-1),$ so  it 
 follows from (2) that $2\notin B_1.$ Since $1\sigma=2,$ we deduce $\tilde \sigma \neq 1.$
	
	We claim that $\tilde H$ is not soluble. If not, then
	$\tilde H$ is a primitive soluble permutation group of degree $a$ and $\tilde \tau$ has an orbit of size $u >a/2.$
	If $u=a,$ then by \cite[Theorem 3]{jon} $\tilde H\leq \perm(4)$ or $a$ is a prime and $\tilde H\leq \agl(1,a).$ If $a\neq u,$ then $\{B_{u+1},\dots,B_a\}$ is a Jordan complement for the action of $\tilde H$ on $\mathcal B.$ By \cite[Theorem 7.4A]{dm} $\tilde H$ is 2-transitive. The soluble 2-transitive subgroups of $\perm(a)$ have been classified by Huppert\cite{hup}, and using this classification we deduce that either 
	 $\tilde H\leq \agl(1,a)$ in its action of degree $a$, or $a \in \{3^2, 5^2, 7^2, 11^2, 23^2, 3^4\}.$
	  By Lemma \ref{fpagl} in the first case and by a direct computation in the second case, it follows that $\tilde \sigma$ fixes at most $\sqrt{a}$ of the blocks $B_1,\dots,B_a$, hence 
	$$\begin{aligned}{\rm{Fix}}(\sigma)&\leq n-|\Omega|+\sqrt{q}|B_1|=n-|\Omega|+\frac{|\Omega|}{\sqrt{q}}\leq n-k\left(1-\frac{1}{\sqrt 2}\right)\\&\leq n\left(1-\delta_1\left(1-\frac{1}{\sqrt{2}}\right)\right)\leq \delta n.\end{aligned}$$ 
	against our assumption.
\end{proof}

\begin{thm}
There exists a positive constant $\eta>0$ such that $\alt(n)$ is $\eta$-insoluble for every $n\geq 5.$
\end{thm}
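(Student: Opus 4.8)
The plan is to assemble the theorem from the two complementary regimes already established: the case where $\sigma$ has few fixed points (\cref{eta1}) and the case where $\sigma$ has many fixed points (\cref{eta2}). The first step is to fix a single threshold $\delta$ that is admissible in both lemmas. Since \cref{eta2} requires $1/\sqrt 2<\delta<1/2+1/\sqrt 8$ while \cref{eta1} only needs $0\le\delta<1$, any $\delta$ in the former interval works; I would simply take $\delta=3/4$, which indeed satisfies $1/\sqrt 2<3/4<1/2+1/\sqrt 8$. With this choice, \cref{eta1} and \cref{eta2} supply constants $\eta_{1,\delta},\eta_{2,\delta}>0$ and an integer $n_0$, which I would enlarge if necessary so that $n_0>6$ and hence $\aut(\alt(n))=\perm(n)$ for every $n\ge n_0$.

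Next I would observe that the hypotheses \emph{at most $\delta n$ fixed points} and \emph{more than $\delta n$ fixed points} partition $\perm(n)\setminus\{1\}$, and that in both lemmas the lower bound on $P_{\rm{ins}}$ is uniform in the second argument. Thus for every $n\ge n_0$, every nontrivial $a\in\aut(\alt(n))=\perm(n)$ and every $b$, one of the two lemmas applies and gives
\[
P_{\rm{ins}}(\alt(n),a,b)\ge \min\{\eta_{1,\delta},\eta_{2,\delta}\}=:\eta',
\]
so $\alt(n)$ is (more than) $\eta'/2$-insoluble for all $n\ge n_0$.

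The remaining step is to absorb the finitely many small degrees $5\le n<n_0$. For each such $n$ the groups $\alt(n)$ and $\aut(\alt(n))$ are finite, so there are only finitely many pairs $(a,b)$ to inspect. When $n\neq 6$ the outer automorphism group is cyclic, so \cite[Theorem 1]{sp} gives $Q(\alt(n),a,b)\neq 0$ and therefore $P_{\rm{ins}}(\alt(n),a,b)\ge Q(\alt(n),a,b)>0$; the isolated group $\alt(6)$ I would treat by a direct verification that the set $\{x:\langle a,bx\rangle\text{ is insoluble}\}$ is nonempty for each $a\neq 1$, which uses only that $\alt(6)$ is insoluble. Minimising over these finitely many positive values produces $\eta''>0$, and then setting $\eta=\tfrac12\min\{\eta',\eta''\}>0$ yields $P_{\rm{ins}}(\alt(n),a,b)>\eta$ for every $n\ge 5$ and every admissible pair $(a,b)$, i.e.\ $\alt(n)$ is $\eta$-insoluble for all $n\ge 5$.

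I expect the only genuinely subtle point to be the very first step: confirming that a single $\delta$ can be chosen simultaneously compatible with both \cref{eta1} and \cref{eta2}. Everything else is routine — the case split on the number of fixed points is an honest dichotomy, the bounds are already uniform in the second argument, and the small degrees are disposed of purely by finiteness together with the positivity of $P_{\rm{ins}}$ coming from \cite{sp}. In effect all the analytic work has been carried out in establishing \cref{eta1} and \cref{eta2}, and the theorem itself is essentially their packaging.
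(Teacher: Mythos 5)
Your proposal is correct and follows essentially the same route as the paper: fix one $\delta$ in the interval $(1/\sqrt{2},\,1/2+1/\sqrt{8})$, combine \cref{eta1} and \cref{eta2} via the fixed-point dichotomy for all large $n$, and absorb the finitely many degrees $5\le n<n_0$ using \cite[Theorem 1]{sp} for $n\neq 6$ and a direct computation for $n=6$. One minor caveat: your parenthetical claim that the $\alt(6)$ case \emph{uses only that $\alt(6)$ is insoluble} is not a valid justification --- insolubility of $\alt(6)$ alone does not guarantee that for every $a\neq 1$ and every $b$ some $x$ gives $\langle a,bx\rangle$ insoluble; as in the paper, this step genuinely requires a finite direct verification (which you do propose to carry out), not an abstract argument.
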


\begin{proof} Choose $\delta$ with $\frac{1}{\sqrt 2} < \delta < \frac{1}{2}+\frac{1}{\sqrt{8}}.$ If $n$ is large enough, then $\alt(n)$ is $\eta$-insoluble with $\eta=\min \{\eta_{1,\delta}, \eta_{2,\delta}\},$ being $\eta_{1,\delta}, \eta_{2,\delta}$ the constants defined in Lemmas \ref{eta1} and \ref{eta2}. To conclude, it suffice to notice that $Q(\alt(n),a,b)\neq 0$ for every $n\geq 5$ and every $(a,b)\in (\aut(\alt(n)))^2$ with $a\neq 1$
	(this follows from  \cite[Theorem 1]{sp} if $n\neq 6$ and from a direct computation if $n=6$).
	\end{proof}
	
\section{Proof of  \cref{main}}

\begin{lemma}\label{ccent}Let $G=\langle x, y\rangle$ be a finite group. Suppose that there exists a chain
	$$1=N_t\leq \dots \leq N_0=G$$ of normal subgroups of $G$ with the property that, for every $0\leq i\leq t-1,$ either $[N_i,N_i]\leq N_{i+1}$ or $[N_i,x]\leq N_{i+1}.$ Then $G$ is soluble.
\end{lemma}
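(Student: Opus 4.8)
The plan is to reduce everything to a single observation about $2$-generated groups and then induct on the length $t$ of the chain, peeling off the bottom factor $N_{t-1}$ (which sits between $N_{t-1}$ and $N_t=1$). I record the observation first, since it is where the hypothesis that $G$ is generated by \emph{two} elements, one of them being $x$, is actually used.

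\textbf{Key observation.} If $N$ is a normal subgroup of $G=\langle x,y\rangle$ and $x$ centralizes $N$ (that is, $[N,x]=1$), then $N$ is abelian. Indeed, conjugation defines a homomorphism $G\to\aut(N)$ with kernel $C_G(N)$, so $G/C_G(N)$ is generated by the images of $x$ and $y$; since $x\in C_G(N)$, the image of $x$ is trivial and $G/C_G(N)=\langle \bar y\rangle$ is cyclic. On the other hand $\inn(N)\cong N/Z(N)$ is isomorphic to $NC_G(N)/C_G(N)\le G/C_G(N)$ (using $N\cap C_G(N)=Z(N)$ together with the normality of $N$), so $N/Z(N)$ is cyclic. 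A group with cyclic central quotient is abelian, whence $N$ is abelian.

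With this in hand the induction is routine. The base case $t=0$ forces $G=N_0=N_t=1$, which is soluble. For $t\ge 1$, the bottom pair $(N_{t-1},N_t)=(N_{t-1},1)$ gives either $[N_{t-1},N_{t-1}]=1$ or $[N_{t-1},x]=1$; in the first case $N_{t-1}$ is abelian outright, and in the second the key observation (applied to $N=N_{t-1}$) again shows $N_{t-1}$ is abelian. Passing to $\bar G=G/N_{t-1}=\langle \bar x,\bar y\rangle$, the images $\bar N_0\ge\dots\ge\bar N_{t-1}=1$ form a chain of normal subgroups of length $t-1$ satisfying the same hypotheses, because $[\bar N_i,\bar N_i]=\overline{[N_i,N_i]}$ and $[\bar N_i,\bar x]=\overline{[N_i,x]}$, so each required inclusion passes verbatim to the quotient. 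By induction $\bar G$ is soluble; since $N_{t-1}$ is abelian, $G$ is an extension of an abelian group by a soluble group, hence soluble.

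The only genuinely non-formal step is the key observation: the moment where $2$-generation is exploited to force the image of $G$ in $\aut(N)$ to be cyclic, and thus $N/Z(N)$ cyclic and $N$ abelian. Everything after that is bookkeeping about how the commutator conditions descend to quotients. As an alternative that avoids the induction, one could instead refine the given series to a chief series of $G$: any non-abelian chief factor $M/L$ would necessarily lie inside a factor $N_i/N_{i+1}$ with $[N_i,x]\le N_{i+1}$ (the abelian alternative is impossible for a non-abelian section), so $x$ would centralize $M/L$; then $\inn(M/L)$, which is non-abelian since a non-abelian minimal normal subgroup has trivial centre, would embed into the cyclic image of $G$ in $\aut(M/L)$ — a contradiction. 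Hence every chief factor is abelian and $G$ is soluble.
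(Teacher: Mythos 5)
Your proof is correct, and it reaches the conclusion by a route that differs in structure, though not in its central mechanism, from the paper's. Both arguments turn on the same point: once $x$ centralizes the bottom factor $N$, two-generation forces $G/C_G(N)=\langle \bar x,\bar y\rangle=\langle \bar y\rangle$ to be cyclic, which is incompatible with $N$ being non-abelian. The paper packages this as an induction on $|G|$: it first reduces to the case where $N=N_{t-1}$ is a minimal normal subgroup, gets $G/N$ soluble by induction, and then, in the non-abelian case, uses minimality to obtain $C_G(N)\cap N=1$ and $x\in C_G(N)\neq 1$, so that a second application of the inductive hypothesis, this time to the proper quotient $G/C_G(N)$, makes $N\cong NC_G(N)/C_G(N)$ soluble --- a contradiction. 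You instead induct on the chain length $t$ and isolate a key observation valid for an \emph{arbitrary} normal subgroup, with no minimality needed: $[N,x]=1$ gives $G/C_G(N)$ cyclic, $N\cap C_G(N)=Z(N)$ gives $N/Z(N)$ cyclic, hence $N$ abelian by the classical central-quotient fact. This buys you two things: you avoid the refinement argument left implicit in the paper's ``we may assume $N_{t-1}$ is a minimal normal subgroup'' (one must insert a minimal normal subgroup into the chain and check the hypotheses persist), and the solubility of $G/C_G(N)$ comes for free from cyclicity rather than from a second inductive appeal. What the paper's version buys in exchange is that minimality makes $Z(N)=1$ automatic, so the centre never enters and $N$ itself embeds in the quotient. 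Your closing chief-series variant is also sound, and is in fact closest to the form in which the lemma is actually applied later, in Lemma~\ref{inclofacile}, where the relevant hypothesis is that $g$ centralizes the non-abelian chief factors.
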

\begin{proof}We prove the statement by induction on the order of $G.$ We may assume that $N=N_{t-1}$ is a minimal normal subgroup of $G.$ By induction $G/N$ is soluble. If $N$ is abelian, then $G$ is soluble. So we may assume that $N$ is non-abelian. 
	Let $C=C_G(N).$ Since $N$ is non-abelian, $N\not\leq C$. Since $N$ is a minimal normal subgroup of $G$, it follows $C\cap N=1.$ Moreover $[N,N]=N\neq 1,$ and therefore, by assumption, $x \in C$. Since $x\neq 1$ (otherwise $G=\langle y\rangle$), if follows $C=C_G(N)\neq 1.$ Hence $N\cong NC/C \leq G/C$ is soluble, in contradiction with the assumption that $N$ is a non-abelian minimal normal subgroup of $G.$
\end{proof}

\begin{lemma}\label{inclofacile}
	If $G$ is a profinite group, then $\Gamma_{\rm{cent}}(G) \subseteq \Sigma_{\rm{pos}}(G).$
\end{lemma}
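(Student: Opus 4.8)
The plan is to reduce to a uniform lower bound on finite quotients and then exploit the fact that the non-abelian chief factors moved by $g$ are not only few but also bounded in order. Fix $g\in\Gamma_{\mathrm{cent}}(G,t)$, write $H=G/N$ and $\bar g$ for the image of $g$. Since a quotient of a soluble group is soluble, $\mathcal S_{G,N}(g)\subseteq\mathcal S_{G,M}(g)$ whenever $N\le M$, so $\mathcal S_G(g)=\bigcap_N\mathcal S_{G,N}(g)$ is a nested intersection of clopen sets, and continuity of the Haar measure from above gives $\mu(\mathcal S_G(g))=\inf_N\mu(\mathcal S_{G,N}(g))=\inf_N P_N$, where $P_N$ is the proportion of $\bar y\in H$ with $\langle\bar g,\bar y\rangle$ soluble. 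Thus it suffices to bound $P_N$ away from $0$ uniformly in $N$.

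For the solubility criterion I would first record a mild strengthening of \cref{ccent}, proved by the very same induction: if $L=\langle u,v\rangle$ is finite and admits a chain $1=L_m\le\dots\le L_0=L$ of normal subgroups such that each factor $L_i/L_{i+1}$ is abelian, or centralized by $u$, or centralized by $v$, then $L$ is soluble. (In the inductive step a non-abelian minimal normal $M$ forces $u\in C_L(M)$ or $v\in C_L(M)$; as $L$ is insoluble both $u,v\ne1$, so $C_L(M)\ne1$, and one applies induction to $L/C_L(M)$.) Now fix a chief series $1=H_m\le\dots\le H_0=H$, let $V_1,\dots,V_s$ with $s\le t$ be its non-abelian factors not centralized by $\bar g$, and set $C_N=\bigcap_j C_H(V_j)$. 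Then $\langle\bar g,\bar y\rangle$ is soluble for every $\bar y\in C_N$: applying the strengthened lemma to $\langle\bar g,\bar y\rangle$ with the chain $K_i=\langle\bar g,\bar y\rangle\cap H_i$, each factor $K_i/K_{i+1}\hookrightarrow H_i/H_{i+1}$ is abelian, or centralized by $\bar g$ (the non-abelian factors it centralizes), or centralized by $\bar y$ (the factors $V_j$, since $\bar y\in C_N$). Hence $P_N\ge|C_N|/|H|=1/[H:C_N]$, and since $H/C_H(V_j)\hookrightarrow\aut(V_j)$ we obtain $[H:C_N]\le\prod_{j=1}^s|\aut(V_j)|$.

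It remains to bound $\prod_j|\aut(V_j)|$ uniformly in $N$; equivalently, to bound the orders of the factors $V_j$ moved by $g$. Here I would use that these factors persist under refinement: if $N'\le N$, taking preimages $\widetilde{H_i}$ in $G/N'$ gives $\widetilde{H_i}/\widetilde{H_{i+1}}\cong H_i/H_{i+1}$, which is again a chief factor carrying the same $\bar g$-action because $(G/N')/\widetilde{H_{i+1}}\cong(G/N)/H_{i+1}$; refining $\{\widetilde{H_i}\}$ to a chief series of $G/N'$ leaves these steps untouched. Thus the multiset $b(N)$ of isomorphism types (with $\bar g$-action) of the non-abelian factors not centralized by $\bar g$ satisfies $b(N),b(N')\subseteq b(N\cap N')$, while $|b(N)|\le t$ for every $N$ by hypothesis. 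Choosing $N_\ast$ with $|b(N_\ast)|$ maximal and applying this monotonicity to $N\cap N_\ast$ forces $b(N)\subseteq b(N_\ast)=\{V_1,\dots,V_s\}$ for all $N$. Hence every such factor has order at most $B:=\max_j|V_j|$, so $[H:C_N]\le(B!)^t=:D$ uniformly, giving $P_N\ge1/D$ and therefore $\mu(\mathcal S_G(g))\ge1/D>0$, i.e.\ $g\in\Sigma_{\mathrm{pos}}(G)$.

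The main obstacle is precisely this last step: passing from \emph{at most $t$ bad factors in every finite quotient} to \emph{the bad factors have bounded order}. Its content is the monotonicity of these factors under refinement together with the global bound $t$, which forces the finitely many isomorphism types to stabilize; by contrast the measure-theoretic reduction and the solubility criterion are routine once the strengthened form of \cref{ccent} is in hand.
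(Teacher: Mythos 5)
Your proof is correct, and it rests on the same two pillars as the paper's — a ccent-type solubility criterion and the stabilization of the non-centralized non-abelian chief factors across finite quotients — but it assembles them differently. The paper uses the stabilization (which it compresses into the single sentence asserting the existence of $M$) to produce one open normal subgroup $M$ (your $N_\ast$) such that for every open normal $N\le M$ all $G$-chief factors of $G/N$ below $M/N$ are abelian or centralized by $g$; then Lemma~\ref{ccent}, exactly as stated (only $g$ in the role of $x$), shows $\langle g,m\rangle$ is prosoluble for every $m\in M$, so that the open subgroup $M$ itself sits inside $\mathcal S_G(g)$ and $\mu(\mathcal S_G(g))\ge |G:M|^{-1}$ in one stroke. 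You instead take as witness, inside each finite quotient $H=G/N$, the joint centralizer $C_N$ of the bad factors, which forces you to (i) prove the symmetric strengthening of Lemma~\ref{ccent} in which each factor may be centralized by either generator (this strengthening is true, and your sketch — embed a non-abelian minimal normal subgroup into the relevant factor of the chain, deduce that one generator centralizes it, and induct on $L/C_L(M)$ — is the same induction as the paper's), and (ii) bound $[H:C_N]$ uniformly, which is where your explicit monotonicity argument for the multiset $b(N)$ enters, yielding $[H:C_N]\le\prod_j|\aut(V_j)|\le (B!)^t$. The trade-off: the paper's route is more economical (no symmetric lemma, no automorphism bounds, and the positive-measure set is exhibited as an open subgroup of $G$ rather than quotient by quotient), while yours makes fully explicit the stabilization step that the paper leaves implicit; the resulting lower bounds $|G:N_\ast|^{-1}$ and $(B!)^{-t}$ are incomparable in general, but both are positive, which is all that is needed.
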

\begin{proof}
	Assume that $g\in \Gamma_{\rm{cent}}(G).$ Then there exists $t \in \mathbb N$ such that $g$ is a $t$-centralizer in $G.$ In particular there exists an open normal subgroup $M$ of $G$ such that, whenever $N$ is an open normal subgroup of $G$ contained in $M,$ $g$ centralizes all the non-abelian chief factors of $M/N.$ It follows from Lemma \ref{ccent}, that $\langle g,m\rangle N/N$ is soluble for every open normal subgroup $N$ of $G.$ In particular $\langle g,m\rangle$ is prosoluble for every $m\in M$. Thus $M\subseteq \mathcal S_G(g),$ and therefore $\mu(\mathcal S_G(g))\geq \mu(M)=|G:M|^{-1}.$
\end{proof}

\begin{lemma}\label{crucial}
	Let $G$ be a finite group and let $g\in G.$ If a chief series of $G$ contains $t$ non-abelian factors that are not centralized by $g$ and whose composition factors are $\eta$-insoluble, then $|\mathcal S_G(g)|/|G|\leq (1-\tilde\eta)^t,$ with $\tilde \eta=\min\{\eta, \frac{53}{90}\}.$
\end{lemma}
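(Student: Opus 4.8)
The plan is to argue by induction on $|G|$, peeling off a minimal normal subgroup $V$ of $G$ at each step. A preliminary observation makes this clean: by Jordan--H\"older, the chief factors of $G$ are determined up to $G$-isomorphism, so the number $t$ of non-abelian chief factors that are not centralized by $g$ and whose composition factors are $\eta$-insoluble does not depend on the chosen chief series. This invariance allows me to refine the given chief series so that $V$ sits at the bottom, and then to split into two cases according to whether $V$ is one of the $t$ counted ``bad'' factors. The base case $t=0$ is trivial, since the asserted bound is $(1-\tilde\eta)^0=1$ and $|\mathcal S_G(g)|\le|G|$ always.

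Suppose first that $V$ is \emph{not} one of the counted factors (for instance $V$ abelian, or centralized by $g$). Let $q\colon G\to G/V$ be the quotient map. Solubility of $\langle g,y\rangle$ forces solubility of its image $\langle gV,yV\rangle$, so $\mathcal S_G(g)\subseteq q^{-1}(\mathcal S_{G/V}(gV))$. Since each fibre of $q$ has size $|V|$ and $G/V$ still carries all $t$ bad factors with respect to $gV$, the inductive hypothesis gives
$|\mathcal S_G(g)|\le |V|\,|\mathcal S_{G/V}(gV)|\le |V|(1-\tilde\eta)^t|G/V|=(1-\tilde\eta)^t|G|$.

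The heart of the argument is the remaining case, where $V$ is non-abelian, not centralized by $g$, and has $\eta$-insoluble composition factors; then $V\cong S^m$ with $S$ $\eta$-insoluble, so by \cref{riduco} the group $V$ is $\tilde\eta$-insoluble. I again use $\mathcal S_G(g)\subseteq q^{-1}(\mathcal S_{G/V}(gV))$, but now estimate each fibre sharply. Fix a representative $y_0$ with $y_0V\in\mathcal S_{G/V}(gV)$ and consider the conjugation homomorphism $\pi\colon G\to\aut(V)$, with kernel $C_G(V)$. Writing $a=\pi(g)$ and $b=\pi(y_0)$, and identifying $V=\inn(V)$ (legitimate since $\z(V)=1$), one has $\pi(y_0v)=bv$ for $v\in V$, whence $\pi(\langle g,y_0v\rangle)=\langle a,bv\rangle$. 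As $g$ does not centralize $V$ we have $a\ne 1$, and solubility of $\langle g,y_0v\rangle$ forces solubility of its homomorphic image $\langle a,bv\rangle$. By $\tilde\eta$-insolubility of $V$ the number of $v\in V$ with $\langle a,bv\rangle$ insoluble exceeds $\tilde\eta|V|$, so at most $(1-\tilde\eta)|V|$ values of $v$ in this fibre lie in $\mathcal S_G(g)$. Summing over fibres and using the inductive bound $|\mathcal S_{G/V}(gV)|\le(1-\tilde\eta)^{t-1}|G/V|$ (note $G/V$ now has $t-1$ bad factors) yields $|\mathcal S_G(g)|\le(1-\tilde\eta)|V|\cdot(1-\tilde\eta)^{t-1}|G/V|=(1-\tilde\eta)^t|G|$.

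The main obstacle is precisely this fibre estimate: it requires matching the conjugation action of the coset $y_0V$ on $V$ with the quantity $P_{\rm ins}(V,a,b)$ from the definition of $\tilde\eta$-insolubility. The two facts that make it work are, first, the identification of $V$ with $\inn(V)$, which ensures $\pi(y_0v)$ sweeps out exactly the coset $bV$ uniformly as $v$ ranges over $V$; and second, the observation that a homomorphic image of a soluble group is soluble, so that insolubility of $\langle a,bv\rangle\le\aut(V)$ is detected from $\langle g,y_0v\rangle$. Everything else is bookkeeping: the invariance of $t$ under Jordan--H\"older, the fact that $\mathcal S_G(g)$ always projects into $\mathcal S_{G/V}(gV)$, and the choice of $V$ at the bottom of the series.
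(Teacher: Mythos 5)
Your proof is correct and follows essentially the same route as the paper: both arguments fibre $\mathcal S_G(g)$ over $\mathcal S_{G/V}(gV)$ for a normal subgroup $V$ at the bottom of the series, bound each fibre by $(1-\tilde\eta)|V|$ by applying Proposition~\ref{riduco} to the conjugation action of $g$ and a coset representative on $V$, and then induct. The only difference is bookkeeping: the paper inducts on $t$ and peels off the lowest insoluble block directly, whereas you induct on $|G|$ with a minimal normal subgroup and add the (easy) extra case where that subgroup is not one of the $t$ counted factors, using Jordan--H\"older invariance --- a slightly more careful arrangement of the same idea.
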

\begin{proof}Suppose that $Y_t < X_t < Y_{t-1} < X_{t-1} < \dots < Y_1  < X_1\leq G$ is a normal series of $G$ such that, for every $1\leq j\leq t,$
	\begin{enumerate}
		\item $[g,X_j]\not\leq Y_j;$
		\item $X_j/Y_j\cong S_j^{n_j},$ where $S_j$ is an $\eta$-insoluble non-abelian simple group.
	\end{enumerate}
	We prove the statement by induction on $t$. Let $N=X_{t-1}.$ 
	By induction $$|\mathcal S_{G/N}(gN)|/|G/N|\leq (1-\tilde \eta)^{t-1}.$$
	 Given $xN\in \mathcal S_{G/N}(gN),$ set $$\Delta(xN)=\{n\in N\mid xn\in \mathcal S_G(g)\}.$$ 
	 Let $\tilde g$ and $\tilde x$ be the automorphisms of $N$ induced by conjugation with $g$ and $x.$ We have $\langle \tilde x, \tilde g\rangle \in \aut N,$ with $\tilde x\neq 1.$ So by  \cref{riduco},
	 $$|\Delta(xN)|\leq (1- {\rm{P_{ins}}}(N, \tilde g, \tilde x))|N|\leq (1-\tilde \eta)|N|.$$

	 Since
	$\mathcal S_G(g)$ is the disjoint union of the subsets $\Delta(xN),$ where
	$xN \in \mathcal S_{G/N}(gN),$
	$$\begin{aligned}|\mathcal S_G(g)|&=\!\!\!\!\!\!\!\!\sum_{xN\in  \mathcal S_{G/N}(gN)}\!\!\!\!\!\!\!\!|\Delta(xN)|\leq |\mathcal S_{G/N}(gN)|(1-\tilde \eta)|N|\\&\leq |G/N|(1-\tilde \eta)^{t-1}(1-\tilde \eta)|N|=|G|(1-\tilde \eta)^t.\qedhere
		\end{aligned}$$
\end{proof}

\begin{proof}[Proof of \cref{main}] By  \cref{inclofacile}, it suffices to prove that if all the non-abelian composition factors of $G$ are $\eta$-insoluble, then $\Sigma_{\rm{pos}}(G) \subseteq \Gamma_{\rm{cent}}(G).$

Suppose $g\in \Sigma_{\rm{pos}}(G).$ Then $\mu({\mathcal S}_G(x))=c>0.$	
Let $N$ be an open normal subgroup of $G$ and let $t$ be the number of non-abelian factors in a composition series of $G/N$ that are not centralized by $g.$ Then, by \cref{crucial},
$$c\leq \mu({\mathcal S}_{G/N}(gN)) \leq \frac{|{\mathcal S}_{G/N}(gN)|}{|G/N|}\leq (1-\tilde \eta)^t,$$
with $\eta=\min\{\eta, \frac{53}{90}\}.$ This implies that $t\leq \log(c)/\log(1-\tilde \eta),$ and therefore $g\in \Gamma_{\rm{cent}}(G,\lfloor\log(c)/\log(1-\tilde \eta)\rfloor)\subseteq \Gamma_{\rm{cent}}(G).$
\end{proof}

\end{document}